\newcommand\Q{{\mathbb Q}}
\newcommand\R{{\mathbb R}}
\newcommand\C{{\mathbb C}}
\newcommand\Z{{\mathbb Z}}
\newcommand\N{{\mathbb N}}
\renewcommand\S{{\mathbb S}}
\renewcommand\P{{\mathbb P}}
\newtheorem{theorem}{Theorem}[section]
\newtheorem*{theorem*}{Theorem}
\newtheorem{lemma}[theorem]{Lemma}
\newtheorem*{claim}{Claim}
\theoremstyle{definition}
\newtheorem{example}[theorem]{Example}
\newtheorem{remark}[theorem]{Remark}
\newtheorem*{remark*}{Remark}
\newtheorem{definition}[theorem]{Definition}
\newcommand{\lcm}{\operatorname{lcm}}
\DeclareMathOperator{\len}{length}
\DeclareMathOperator{\dist}{dist}
\DeclareMathOperator{\ord}{ord}
\title{Lipschitz geometry of complex algebraic plane curves}
\author{Renato Targino}
\address{Aix Marseille Univ, CNRS, Centrale Marseille, I2M, Marseille, France}
\address{Departamento de Matem\'atica,Universidade 
Federal do Cear\'a,
Fortaleza-CE, Brazil}
\address{Instituto Federal de Educa\c c\~ao, Ci\^encia e Tecnologia do Cear\'a, Maracana\'u-CE, Brazil}
   \email{renatotargino@ifce.edu.br}
\begin{document}

\maketitle
\section{Introduction}

\begin{abstract}
We present a complete classification of complex plane algebraic curves, equipped with the induced Euclidean, up to global bilipschitz homeomorphism.      
\end{abstract}

One of the most natural questions in the investigation of a  class of mathematical objects is  the problem of classification of these objects. Here the classification problem is treated from the outer metric viewpoint: all  the  subsets  of the euclidean space $\R^n$ are  considered  equipped  with  the induced Euclidean metric. 
Our objects are complex algebraic plane curves and we obtain a complete classification of them up to global bilipschitz homeomorphism.
In order to present precisely our results let us introduce some definitions and notations.

\begin{definition}
Let $(M,d)$ and $(M',d')$ be two metric spaces. 
A map $f:M\to M'$ is \textbf{Lipschitz} if  there exists a real constant $c>0$ such that 
$$ d'(f(x),f(y))\leq cd(x,y)\text{ for all }x,y\in M.$$ 

A Lipschitz map $f:M\to M'$ is called \textbf{bilipschitz} if its inverse  exists and it is Lipschitz.  We say that $M$ and $M'$ are \textbf{bilipschitz equivalent} if there exists a bilipschitz map $f:M\to M'$ between them.  
The equivalence class of $M$ in this relation is called the \textbf{Lipschitz geometry} of $M$.
\end{definition}

One  of  the  recent  works  on Lipschitz geometry, Neumann and Pichon \cite{Anne} proved that two germs of plane complex curves are bilipschitz homeomorphic if only if they have the same topological type, the meaning of topological type here is in the following definition.

\begin{definition}
Let $(C_1,p_1)\subset (S_1,p_1)$ and   $(C_2,p_2)\subset (S_2,p_2)$ be two germs of complex curves on smooth surfaces. 
We say that $(C_1,p_1)$ and $(C_2,p_2)$ have the same \textbf{topology type} if there is a homeomorphism of germs $h \colon (S_1,p_1) \to (S_2,p_2)$ such that $h(C_1)=C_2$. 
\end{definition}

Previous contributions on the problem of classification of germs of plane complex curves up to bilipschitz equivalence were made by  Fernandes \cite{fernandes2003}, and Pham and Teissier \cite{pham:hal-00384928}. 
Let us point out that the theorems of Pham and Teissier \cite{pham:hal-00384928}, Fernandes \cite{fernandes2003}, and Neumann and Pichon \cite{Anne} are local results.    

 On the other hand, looking to scrutinize global Lipschitz geometry of algebraic sets, Fernandes and Sampaio \cite{Fernandes2019} arrived on the notion of bilipschitz equivalence at infinity of subsets in  Euclidean space.
 
 \begin{definition}\label{bi-Lipschitz equivalent at infinity}
Let $X\subset \R^n$ and $Y\subset\R^m$ be two subsets.
We say that $X$ and $Y$ are {\bf bilipschitz equivalent at infinity} if there exist compact subsets $K\subset\R^n$ and $\widetilde K\subset \R^m$, and a bilipschitz map $\Phi \colon X\backslash K\rightarrow Y\backslash \widetilde K$. 
The equivalence class of $X$ in this relation is called the \textbf{Lipschitz geometry  at infinity} of $X$.
\end{definition}
 
 One of the goals of this paper is to bring a complete bilipschitz classification of complex algebraic plane curves at infinity. 
 In order to present our result concerned to that classification we need to introduce more definitions and notations. We denote by $\P^2$ the projective plane.
 Let $[x:y:z]\in\P^2$ denote the subspace spanned by $(x,y,z)$, and let $\iota: \C^2\hookrightarrow \P^2$ be the parametrization given by $\iota(x,y)=[x:y:1]$. 
The \textbf{line at infinity}, denoted by $L_{\infty}$, is the complement of $\iota( \mathbb{C}^2)$ in  $\mathbb{P}^2 $.

\begin{definition}
 Let $f\in \C[x,y]$ be a polynomial of degree $n$.
 The \textbf{homogenization} of $f$ is the homogeneous  polynomial $\widetilde{f}\in\C[x,y,z]$ defined by 
 $$\widetilde{f}(x,y,z)=z^nf\left(\frac{x}{z},\frac{y}{z}\right).$$
 Let $C$ be a complex algebraic plane curve with equation   $f(x,y)=0$. 
 The  curve $\widetilde{C}=\{[x:y:z]\in \P^2:\widetilde{f}(x,y,z)=0\}$ in the projective plane is called the \textbf{homogenization} of $C$. 
 The \textbf{points at infinity} of $C$ are the elements of the intersection $\widetilde{C}\cap L_{\infty}$. 
 \end{definition}

 We prove that the Lipschitz geometry at infinity of a complex algebraic plane curve $C$ determines and is determined by the topological type of the germ of the curve $\widetilde{C}\cup L_{\infty}$ at each point at infinity of $C$.
 Since the topological type of germs of complex plane curves are presented in terms of dual resolution graphs we also encode  the Lipschitz geometry at infinity in a tree obtained as a quotient of dual resolution graphs as follows.
 
 \begin{theorem}\label{atinfinity}
Let $C$ and $C'$ be two complex algebraic plane curves.
The following are equivalent:
 \begin{enumerate}
      \item\label{it1} $C$ and $C'$ have the same Lipschitz geometry at infinity;
  \item\label{it2} there is a bijection $\psi$ between the set of points at infinity of $C$ and the set of points at infinity of $C'$ such that $(\widetilde{C}\cup L_\infty,p)$ has the same topological type as $(\widetilde{C}'\cup L_\infty,\psi(p))$;
  \item\label{it3} there is an isomorphism between the Lipschitz tree at infinity of  $C$ and $C'$ (see definition \ref{liptree}).
  \end{enumerate}
\end{theorem}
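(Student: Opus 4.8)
The equivalence $(\ref{it2})\Leftrightarrow(\ref{it3})$ is, once Definition~\ref{liptree} is unwound, a classical fact about plane curve singularities: the embedded topological type of $(\widetilde C\cup L_\infty,p)$ is equivalent to the datum of the minimal embedded resolution graph of $\widetilde C\cup L_\infty$ at $p$, decorated by the multiplicities along $\widetilde C$ and along $L_\infty$, and the Lipschitz tree at infinity is obtained from the disjoint union of these graphs (one per point at infinity) by the usual collapsing of the subgraphs carrying no metric data; so an isomorphism of Lipschitz trees at infinity amounts precisely to a bijection $\psi$ of the sets of points at infinity together with an isomorphism of decorated resolution graphs over each pair $p,\psi(p)$, which is $(\ref{it2})$. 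The heart of the matter is therefore $(\ref{it1})\Leftrightarrow(\ref{it2})$, and the plan is to reduce it, by a localisation at infinity, to a purely local bilipschitz classification.

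I would first localise. Around a point at infinity $p$ of $C$, after a linear change of coordinates in $\C^2$ we may assume $p=[1:0:0]$ and use the chart $\varphi(u,v)=[1:u:v]$, in which $L_\infty=\varphi(\{v=0\})$, $p=\varphi(0,0)$, $\widetilde C$ is the closure of $\{\widetilde f=0\}$, and $\iota(\C^2)$ corresponds to $\{v\ne 0\}$ through the overlap map $(x,y)\mapsto(u,v)=(y/x,1/x)$, whose inverse is $(u,v)\mapsto(1/v,u/v)$. For $R\gg 0$ the set $C\setminus B_R$ is a finite disjoint union of pieces, each inside an arbitrarily thin cone around a single point at infinity, and the piece around $p$ is carried by the overlap map onto a punctured neighbourhood of $p$ in $\widetilde C\setminus L_\infty$, equipped with the metric $g_p$ that is the restriction to $\widetilde C$ of the pull-back $|v|^{-4}|dv|^2+|v|^{-2}|du-(u/v)\,dv|^2$ of the Euclidean metric; along a branch of $\widetilde C$ with uniformiser $t$ this metric has length element $\asymp |t|^{-2m-2}\,|dt|$, where $m$ is the intersection multiplicity of that branch with $L_\infty$ at $p$. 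The key local statement is then: \emph{the bilipschitz class of $\big((\widetilde C,p)\setminus L_\infty,\,g_p\big)$ is determined by, and determines, the embedded topological type of $(\widetilde C\cup L_\infty,p)$ with $L_\infty$ marked.} This is the exact analogue, for the degenerate metric $g_p$, of the local results of Pham--Teissier~\cite{pham:hal-00384928}, Fernandes~\cite{fernandes2003}, and Neumann--Pichon~\cite{Anne}. I would prove it by producing a carrousel decomposition of a neighbourhood of $p$ in $\widetilde C$ adapted to $g_p$ and indexed by the decorated minimal resolution graph of $\widetilde C\cup L_\infty$: each zone is a standard model --- a finite union of annular ends, one per branch of $\widetilde C$ running through the corresponding part of the graph, with the $L_\infty$-multiplicities prescribing the rate at which the sheets of such a branch spread apart at infinity and the contact orders among the branches of $\widetilde C$ prescribing how the zones are nested --- and zones over corresponding vertices are glued in the same way, so isomorphic decorated graphs yield bilipschitz metric spaces. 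Conversely, the graph with all its decorations is recovered from $g_p$ as a collection of bilipschitz invariants: the lengths of shortest arcs between pairs of branches at a prescribed distance from $p$ recover the contact orders among the branches of $\widetilde C$, and the way a single branch of $\widetilde C\setminus L_\infty$ breaks into sheets in the $\iota(\C^2)$-chart recovers its $L_\infty$-multiplicity.

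Granting the local statement, $(\ref{it2})\Rightarrow(\ref{it1})$ is routine. One verifies that the homeomorphism realising the equality of topological types may be taken to carry $L_\infty$ to $L_\infty$ ($L_\infty$ being smooth, and the exceptional configurations in which the choice is ambiguous being symmetric enough that the choice is immaterial); hence for each $p$ one gets a bilipschitz equivalence $\big((\widetilde C,p)\setminus L_\infty,g_p\big)\to\big((\widetilde C',\psi(p))\setminus L_\infty,g'_{\psi(p)}\big)$. Transporting these back through the overlap maps and assembling them over the finitely many points at infinity (placing the images in disjoint cones) produces a bilipschitz map $C\setminus B_R\to C'\setminus B_{R'}$, i.e.\ $(\ref{it1})$. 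For $(\ref{it1})\Rightarrow(\ref{it2})$, note first that for $\rho\gg 0$ the connected components of $C\setminus B_\rho$ are exactly the branch ends (one per branch of $\widetilde C$ at each point at infinity), and that ends are a bilipschitz invariant: a bilipschitz map at infinity, after enlarging the compacta, induces a bijection between the branch ends of $C$ and those of $C'$ under which corresponding branch ends are bilipschitz at infinity. Moreover two branch ends lie over the same point at infinity precisely when their mutual distance at Euclidean distance $r$ from the origin is $o(r)$ --- branch ends over distinct points at infinity are at distance $\asymp r$, since they approach distinct directions --- and this dichotomy is preserved by bilipschitz maps; so the bijection of branch ends descends to a bijection $\psi$ of the sets of points at infinity under which the corresponding local pieces $\big((\widetilde C,p)\setminus L_\infty,g_p\big)$ and $\big((\widetilde C',\psi(p))\setminus L_\infty,g'_{\psi(p)}\big)$ are bilipschitz. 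The converse half of the local statement then gives the equality of topological types, which is $(\ref{it2})$.

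The main obstacle is the local statement, and inside it the direction ``$g_p$ bilipschitz type $\Rightarrow$ embedded topological type'': this is the place where discrete data (the resolution graph with its $L_\infty$-decoration) must be extracted from metric data, and it is the analogue of the hard half of \cite{Anne}. It should, however, be more tractable here than in the surface setting of Neumann--Pichon, since $\widetilde C$ is one-dimensional and the only invariants to recover are the contact orders among the branches and the intersection multiplicities with $L_\infty$; the former are read off from the asymptotics of the shortest connecting arcs, the latter from the sheet count and separation of a branch viewed in the $\iota(\C^2)$-chart. The rest is bookkeeping: checking that $g_p$ restricted to $\widetilde C$ is genuinely bilipschitz to the model described above --- which uses that, along each branch, $du$ is controlled by $dv$ through the branch's contact with $L_\infty$ --- and that the end decomposition is respected throughout the assembly over the points at infinity.
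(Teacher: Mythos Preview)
Your high-level strategy matches the paper's: localise at each point at infinity, reduce to a local bilipschitz classification, and identify that with the embedded topological type of $(\widetilde C\cup L_\infty,p)$. The execution, however, differs in two substantive ways.

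For $(\ref{it1})\Rightarrow(\ref{it2})$ the paper does not pass to a pull-back metric $g_p$ on the chart at infinity or build a carrousel decomposition of that metric. It stays in $\C^2$: intersecting $C$ with the line $x=t$ gives $d_1$ points $p_1(t),\dots,p_{d_1}(t)$ with $d(p_j(t),p_k(t))=\Theta(t^{1-q(j,k)})$, and the matrix $q(j,k)$ is exactly the carousel-tree function $\ord_x$ of Definition~\ref{carouseultree}, from which the Eggers--Wall tree relative to $L_\infty$ (and then, via the inversion theorem of \cite{GarciaBarroso2019}, relative to a generic smooth branch) is rebuilt. The part you flag as the main obstacle---extracting discrete invariants from the metric alone---is handled by the \emph{bubble trick}: one replaces the vertical line by a ball $B(p_1(t),\eta t)$, shows its trace on $C$ still has $d_1$ components with the same pairwise distance asymptotics, and then compares balls of radii $\eta t/L^2,\eta t/L,\eta t,\eta L t$ to see that the component count and the exponents $q(j,k)$ survive an $L$-bilipschitz change of metric. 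Your ``carrousel decomposition adapted to $g_p$'' and ``asymptotics of shortest connecting arcs'' point in the right direction but are not a substitute for this argument; indeed the paper's proof \emph{is} the Neumann--Pichon technique transplanted to infinity, so your remark that the hard half ``should be more tractable here than in the surface setting'' understates what is actually required. A related subtlety you glide over: your $g_p$ is a Riemannian (hence inner) metric, whereas bilipschitz equivalence at infinity is stated for the outer metric. The paper handles this implicitly via Lemma~\ref{semnome}, which bounds outer distances by a projection-plus-vertical-distance decomposition; in your framework you would need to invoke (or prove) inner/outer equivalence for the ends.

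For $(\ref{it2})\Rightarrow(\ref{it1})$ the paper does not assemble abstract local bilipschitz equivalences and glue. It writes down a single explicit map $\Phi$ on $C\setminus K$ by matching Newton--Puiseux parametrisations $\Gamma^1_{ij}(w)\mapsto\Gamma^2_{ij}(w)$, reduces the Lipschitz check (again via Lemma~\ref{semnome}) to pairs with equal $x$-coordinate, and verifies the estimate branch-by-branch using that the characteristic exponents and the coincidence exponents relative to $L_\infty$ agree. Your gluing approach would also work, but you would still need an outer-metric estimate across pieces over distinct points at infinity (the $\Theta(r)$ versus $o(r)$ dichotomy you mention) to conclude that the union of the local maps is globally bilipschitz; the explicit construction avoids that bookkeeping.

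Finally, a small correction on $(\ref{it2})\Leftrightarrow(\ref{it3})$: the Lipschitz tree at infinity in Definition~\ref{liptree} is not obtained by ``collapsing subgraphs carrying no metric data'' but simply by taking the disjoint union of the dual resolution graphs of $(\widetilde C\cup L_\infty,p_i)$ and identifying all the $L_\infty$-arrow vertices to a single root. Your conclusion for this equivalence is correct (it is \cite[Theorem~8.1.7]{Wall} applied at each $p_i$), but the description of the tree is not.
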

 Armed with the classification of the Lipschitz geometry of germs and of the Lipschitz geometry at infinity of complex algebraic plane curves we obtain our main result.

 \begin{theorem}\label{global}
Let $C$ and $\Gamma$ be two   complex plane algebraic  curves with irreducible components $ C= \bigcup_{i\in I} C_i$ and $\Gamma=\bigcup_{j\in J} \Gamma_j$. The following are equivalent:
 
   \begin{enumerate}
     \item \label{iti}$C$ and $\Gamma$ have the same Lipschitz geometry;
     \item \label{itii} there are bijections $\sigma: I\to J$ and $\varphi$ between the set of singular points  of $\widetilde{C}\cup L_\infty$ and the set of singular points  of $\widetilde{\Gamma}\cup L_\infty$ such that $p\in L_\infty$ if only if $\varphi(p)\in L_\infty$,  $(\widetilde{C}\cup L_\infty,p)$ has the same topological type as $(\widetilde{\Gamma}\cup L_\infty,\varphi(p))$, and each $(\widetilde{C}_i\cup L_\infty,p)$ has the same topological type as $(\widetilde{\Gamma}_{\sigma(i)}\cup L_\infty,\varphi(p))$;
     \item \label{itiii} there is an isomorphism between the Lipschitz graph of  $C$ and $\Gamma$ (see definition \ref{Lipgraph}).
 \end{enumerate}
\end{theorem}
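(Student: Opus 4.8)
The plan is to prove $(\ref{itiii})\Leftrightarrow(\ref{itii})$ by unwinding the definition of the Lipschitz graph, and $(\ref{itii})\Leftrightarrow(\ref{iti})$ by a decompose--match--reglue argument that feeds on Theorem~\ref{atinfinity} at the points at infinity, on the Neumann--Pichon theorem \cite{Anne} at the finite singular points, and on elementary surface topology in between. For $(\ref{itiii})\Leftrightarrow(\ref{itii})$ I would observe that the Lipschitz graph of $C$ (Definition~\ref{Lipgraph}) is by construction assembled from one vertex per irreducible component $C_i$, one vertex per singular point $p$ of $\widetilde C\cup L_\infty$ decorated by the embedded topological type of $(\widetilde C\cup L_\infty,p)$, by the list of components through $p$ together with the matching of their branches, and by the label distinguishing $p\in L_\infty$ from $p$ finite, and from edges recording these incidences. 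An isomorphism of Lipschitz graphs is then literally the same data as a pair of bijections $\sigma,\varphi$ as in (\ref{itii}), so this step reduces to checking that the decorations of Definition~\ref{Lipgraph} and the list of topological types in (\ref{itii}) determine one another, which is immediate.

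For the decomposition, given $C\subset\C^2\cong\R^4$ I would fix $R\gg 1$ so that all finite singular points of $C$ lie in $B_R$, the sphere $S_R$ is transverse to $C$, and $C\setminus B_R$ splits, near each point at infinity, into one metrically standard end per branch of $\widetilde C$ there; and $\varepsilon>0$ small so that the balls $B_\varepsilon(p)$, $p$ finite singular, are disjoint, contained in $B_R$, transverse to $C$, with $C\cap B_\varepsilon(p)$ a cone over the link of $(C,p)$. This writes $C$ as a union, glued along product neighbourhoods of disjoint families of circles, of the singular cones $C\cap B_\varepsilon(p)$, the ends at infinity $C\cap(\R^4\setminus B_R)$, and the \emph{core} $M=C\cap(\overline{B_R}\setminus\bigcup_pB_\varepsilon(p))$, a compact smooth surface whose piece $M_i\subset C_i$ is connected (as $C_i$ is irreducible, hence connected) with boundary circles in bijection with the branches of $\widetilde C_i$ at the singular points of $\widetilde C\cup L_\infty$ lying on $\widetilde C_i$. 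Assuming (\ref{itii}): Neumann--Pichon \cite{Anne} gives, for each finite $p$, a bilipschitz homeomorphism of germs $(C,p)\to(\Gamma,\varphi(p))$ which, because the topological type is matched together with its component labels, may be taken to carry the $C_i$--part to the $\Gamma_{\sigma(i)}$--part and to match links branch by branch; Theorem~\ref{atinfinity} does the same for the ends over each $p\in L_\infty$, using that the equivalence it provides is localized at the points at infinity and respects branches. It remains to match the cores: the genus of the normalization of $\widetilde C_i$ is $\binom{d_i-1}{2}-\sum_p\delta_p(\widetilde C_i)$, where $d_i=\deg C_i=\sum_{p\in L_\infty}(\widetilde C_i\cdot L_\infty)_p$, and both $d_i$ and the local $\delta$--invariants are read off the topological types appearing in (\ref{itii}); hence $M_i$ and the corresponding core piece of $\Gamma$ are diffeomorphic compact surfaces with the same, already matched, boundary circles, so there is a diffeomorphism between them prescribed on the boundary to agree with the two families of cone maps, and such a diffeomorphism is bilipschitz since outer and intrinsic metrics agree up to a constant on a compact smooth surface. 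Arranging all the maps to coincide on bicollars of the gluing circles, they assemble to a homeomorphism $C\to\Gamma$ which is bilipschitz by the standard regluing argument for the Lipschitz geometry of curves.

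For $(\ref{iti})\Rightarrow(\ref{itii})$, let $\Phi\colon C\to\Gamma$ be bilipschitz. From the bilipschitz inequality $|\Phi(x)|\to\infty$ as $|x|\to\infty$, so $\Phi$ restricts to a bilipschitz homeomorphism at infinity and Theorem~\ref{atinfinity} yields a topological-type--preserving bijection of the points at infinity. Next, the set of metrically singular points of $C$ --- points with no neighbourhood bilipschitz to a $2$--disc --- is exactly $\mathrm{Sing}(C)$ (a smooth point has a metrically conical, hence disc-like, neighbourhood; at a singular point this fails, e.g. a cuspidal branch is not bilipschitz to an arc, as comparing outer diameters along the branch shows) and it is a bilipschitz invariant, so $\Phi(\mathrm{Sing}(C))=\mathrm{Sing}(\Gamma)$. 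Hence $\Phi$ restricts to a homeomorphism $C_{\mathrm{reg}}\to\Gamma_{\mathrm{reg}}$, which permutes connected components; since the connected components of $C_{\mathrm{reg}}$ are the sets $C_i\cap C_{\mathrm{reg}}$, this produces $\sigma\colon I\to J$ and, passing to closures, $\Phi(C_i)=\Gamma_{\sigma(i)}$. Thus $\Phi$ sends finite singular points to finite singular points and ends at infinity to ends at infinity, which together with the earlier bijection defines $\varphi$ with $p\in L_\infty$ iff $\varphi(p)\in L_\infty$; that $\varphi$ preserves the component-labelled topological type at each finite point follows from Neumann--Pichon \cite{Anne} applied to the bilipschitz germ $\Phi\colon(C,p)\to(\Gamma,\varphi(p))$, which respects the component decomposition, and at the points at infinity from Theorem~\ref{atinfinity} applied likewise.

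I expect the genuine difficulty to lie entirely at the interface between the local and at-infinity statements and the global picture: first, extracting from Theorem~\ref{atinfinity} and from \cite{Anne} the \emph{branch-by-branch, component-labelled} matchings, i.e. that the local bilipschitz homeomorphisms can be chosen compatibly with the decomposition of the links into branches and with the partition of branches among the $C_i$; and second, the regluing --- choosing $\varepsilon$, $R$ and the bicollars so that the pieces overlap cleanly and the bilipschitz constant of the assembled map stays under control. The genus bookkeeping and the ``a cusp is not an arc'' type observations are routine by comparison.
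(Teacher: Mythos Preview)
Your approach is essentially the paper's: $(\ref{itii})\Leftrightarrow(\ref{itiii})$ by unwinding Definition~\ref{Lipgraph}, and $(\ref{iti})\Leftrightarrow(\ref{itii})$ by the same decompose--match--reglue argument, invoking \cite{Anne} at the finite singularities, Theorem~\ref{atinfinity} at infinity, and surface classification on the compact core (the paper computes the core's Euler characteristic via Wall's formula $\chi(\widetilde C_i)=3d_i-d_i^2+\sum_p\mu_p(\widetilde C_i)$ rather than your genus--$\delta$ version, and packages the surface extension step as a separate Lemma~\ref{classification}). One simplification worth adopting: rather than characterizing $\mathrm{Sing}(C)$ intrinsically as the metrically singular locus---your ``cusp $\neq$ arc'' sketch does not cover nodes---the paper simply applies \cite{Anne} to the germ $(C,p)\to(\Gamma,\Phi(p))$ at every point, which forces $\Phi$ to preserve topological type pointwise and hence to carry $\mathrm{Sing}(C)$ onto $\mathrm{Sing}(\Gamma)$.
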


We organize the paper in the following way. In Section \ref{trees}, we present definitions of Eggers-Wall and carousel tree. We also describe how one gets the Eggers-Wall tree from the carousel tree.

 Section \ref{lipimplies} is devoted to prove that the Lipschitz geometry at infinity of a complex plane algebraic  curves gives us the topological data which implies (\ref{it2}) of Theorem \ref{atinfinity}, i.e., we prove that (\ref{it1}) implies (\ref{it2}).
 We also give the definition of Lipschitz graph at infinity and explain the equivalence between (\ref{it2}) and (\ref{it3}).
In Section \ref{topsimplies}, we  prove that (\ref{it2}) implies (\ref{it1}) of Theorem \ref{atinfinity}.

In the last section, we define Lipschitz graph of complex plane algebraic  curves and  prove Theorem \ref{global}.

\subsection*{Acknowledgments.} 
I would like to thank Edson Sampaio, Lev Birbrair and Rodrigo Mendes for valuable discussions on the subject. 
I am deeply indebted to Alexandre Fernandes and Anne Pichon for supervise me along this work which is part of my PhD thesis.

This work has been partially supported by CAPES/COFECUB  project 88887.\\ 143177/2017-00 - An\'alise Geom\'etrica e Teoria de Singularidade em Espa\c cos Estratificados and  by the project Lipschitz geometry of singularities (LISA) of the Agence Nationale de la Recherche (project ANR-17-CE40-0023) and also by Instituto Federal de Educa\c c\~ao, Ci\^encia e Tecnologia do Cear\'a (IFCE).

\section{Plane curve germs  and their Eggers-Wall and carousel  trees }\label{trees}
In this section we explain the basic notations and conventions used throughout the paper about reduced germs $C$ of complex curves on smooth surfaces. 
Then we define the Eggers-Wall tree and the carousel tree of such a germ relative to a smooth branch contained in it. 
The definition of Eggers-Wall tree which are given in this paper are the same present in \cite{GarciaBarroso2019}.  
Finally, we  describe how one gets the Eggers-Wall tree from the carousel tree. This process is also described in \cite{Anne}.

We  recall some definitions  and conventions about power series with positive rational exponents. Let $n$ be a positive integer, the ring $\C[[x^{1/n}]]$ consists of sequence $ (A_k)_{k\in \N}$ of elements of $\C$.  
Let $\eta=(A_k)_{k\in \N}\in \C[[x^{1/n}]]$, we denote this element by
$$\eta=\sum_{k=0}^{\infty}A_kx^{k/n}.$$
 The \textbf{exponents} of $\eta$ are the numbers $k/n$ such that $A_k\neq0$. We denote the set of exponents of $\eta$ by $\mathcal{E}(\eta)$.  The \textbf{order} of $\eta\neq 0$, denoted by $\ord_x \eta$, is the smallest exponent of $\eta$. For technical reasons it is convenient to define the order of the zero to be $+\infty$. The subgroup of $n$-th roots of 1 acts on $\C[[x^{1/n}]]$ by the rule
$$(\rho,\eta)\to \eta(\rho\cdot x^{1/n}):=\sum_{k=0}^{\infty}A_k\rho^kx^{k/n},\text{ where  $\rho$ is a $n$-th root of 1.}$$

All over this section, $\mathcal{S}$  denotes a complex manifold of dimension two. 
We fix a point $O\in \mathcal{S}$. 
All coordinate charts of this section are defined in a neighborhood of $O$, moreover, the point $O$ always has coordinate $(0,0)\in\C^2$. A curve  germ in $(\mathcal{S},O)$ is the zero set of a non-constant holomorphic function germ from $(\mathcal{S},O)$ to $(\C,0).$      We denote by $(C,O)$ the germ of $C$ at $O$ and by $\mathcal{O}_O$ the ring of holomorphic function germs at $O$.

Any chart of $\mathcal{S}$  induces an isomorphism between $\mathcal{O}_O$ and $\C\{x,y\}$.   Since $\C\{x,y\}$ is factorial, $\mathcal{O}_O$ is factorial. Let $C$ be   a complex curve with equation $f=0$. 
Then $f$ can be written as a product $g_1^{\alpha_1}\ldots g_k^{\alpha_k}$, with $g_1,\ldots, g_k$ irreducible, and the $\alpha_j$'s are positive integers.
The zero set of $g_j$'s are the \textbf{branches} of $C$. When $k=1$, we say that $C$ is \textbf{irreducible}. 
The holomorphic function  $f$ is \textbf{reduced} if each $\alpha_j=1$. We will always suppose all equations for curves are reduced.
The curve $C$ is said to be \textbf{smooth at} $O$   if  there is a neighborhood $U$ of $O$ in $\mathcal{S}$ such that $C\cap U$ is a complex submanifold of $U$.

The next definitions of this section depend on the choice of a smooth curve $L$ at $O$. 
In this section, we always choose a coordinate system $(x,y)$ such that $L=\{x=0\}$. Assume that a coordinate system $(x,y)$ is fixed. 
Let $C$ be a  curve on $\mathcal{S}$  and assume that $A$ is a branch of $C$  different from the  curve $L$. Relative to the system $(x,y)$, the branch $A$ may be defined by a Weierstrass polynomial $f_A\in \C\{x\}[y]$, which is monic, and of degree $d_A$.
Note that the degree $d_A$ does not depend on the system of coordinates.  

By the Newton-Puiseux Theorem, there exists a parametrization of $A$ of the form $\gamma_A(w)=(w^{d_A},\eta_A(w))$ where $\eta_A(w)=\sum_{k>0}a_kw^k\in \C\{w\}$. 
Let $n$ be the product of the degrees of the Weierstrass polynomials of the branches of $C$ different from $L$.
We consider  the formal power series  $\sum_{k=0}A_kx^{k/n}\in \C[[x^{1/n}]]$ where 
$$A_k=\begin{cases}
			a_\frac{kd_A}{n}, & \text{if $n$ divides $kd_A$ }\\
            0, & \text{otherwise.}
		 \end{cases}$$
We still denote by $\eta_A$ the formal power series $\sum_{k=0}A_kx^{k/n}$. 
 The \textbf{Newton-Puiseux roots relative to} $L$ of the branch $A$ are the formal power series $\eta_A(\rho\cdot x^{1/n})\in \C[[x^{1/n}]]$, for $\rho$ running through the $n$-th roots of 1.

 Let $\rho\in\C$ be a primitive $n$-root of unity, notice that there are only $d_A$ Newton-Puiseux roots relative to $L$ of the branch $A$, namely $$\eta_A(\rho\cdot x^{1/n}),\ldots, \eta_A(\rho^{d_A}\cdot x^{1/n}).$$

All the Newton-Puiseux roots relative to $L$ of the curve $A$ have the same exponents. Some of those exponents may be distinguished by looking at the differences of roots:

\begin{definition}
The \textbf{characteristic exponents relative to} $L$ of the curve $A$ are the $x$-orders $\ord_x (\eta_A-\eta'_A)$ of the differences between distinct Newton-Puiseux roots relative to $L$ of $A$.
\end{definition}

The characteristic exponents relative to $L$ of $A$ consist of exponents of $\eta_A$ which, when written as a quotient of integers, need a denominator strictly bigger than the lowest common denominator of the previous exponents.  That is: $\frac{l}{n}$ is 
characteristic exponent relative to $L$ of $A$ if and only if $N_l\frac{l}{n}\not\in\Z$ where $N_l=\min\{N\in \Z\ ;\mathcal{E}(\eta_A)\cap[0,\frac{l}{n})\in\frac{1}{N}\Z \} $.

By \cite[Proposition 3.10]{GarciaBarroso2019}
 the characteristic exponents relative to $L$ do not depend on the coordinate system
$(x, y)$, but only on the branch $L$.

The \textbf{Newton-Puiseux roots relative} to $L$ of the curve $C$ are the Newton-Puiseux roots relative to $L$ of its branches different from $L$. Let us denote by $\mathcal{I}_C$ the set of
branches of $C$ which are different from $L$. Therefore, $C$ has $d_C:=\sum_{A\in \mathcal{I}_C} d_A$ Newton-Puiseux roots relative to $L$. 

\begin{example}\label{examplelocal}
Let $L$ be the $y$-axis. Consider a plane curve $C$ whose branches $A$ and $B$ are  parametrized by 
$$\gamma_A(w)=(w^4,w^6+w^7),\, \gamma_B(w)=(w^2,w),  $$
respectively. The Newton-Puiseux roots relative to $L$ of $A$ are 
\begin{align*}
\eta_A(x^{1/8})& =x^{12/8}+x^{14/8},&
\eta_A(\rho x^{1/8})& =\rho^4x^{12/8}+\rho^6x^{14/8},\\
\eta_A(\rho^2 x^{1/8})&=x^{12/8}+\rho^4x^{14/8},&
\eta_A(\rho^3 x^{1/8})&=\rho^4x^{12/8}+\rho^2x^{14/8},
\end{align*}
where $\rho$ is a primitive 8-th root of unity. While the Newton-Puiseux roots relative to $L$ of $B$ are
\begin{align*}
\eta_B(x^{1/8})& =x^{4/8},&
\eta_B(\rho x^{1/8})& =\rho^4x^{4/8}.
\end{align*}
The characteristic exponents relative to $y$-axis of $A$ are $3/2, 7/4$. The characteristic exponent of $B$ relative to $y$-axis is $1/2$.
\end{example}

We keep assuming that $A$ is a branch of $C$ different from $L$.
The Eggers-Wall tree of $A$ relative to $L$ is a geometrical way of encoding the set of characteristic exponents, as well as
the sequence of their successive common denominators:

\begin{definition}
 The Eggers-Wall tree $\Theta_L (A)$ of the curve $A$ relative to $L$ is a
compact oriented segment endowed with the following supplementary structures:
\begin{itemize}
    \item  an increasing homeomorphism $\mathbf{e}_{L,A} : \Theta_L (A) \to [0,\infty]$, \textbf{the exponent function};
\item \textbf{marked points}, which are by definition the points whose values by the exponent function
are the characteristic exponents of $A$, as well as the smallest end of $\Theta_L (A)$,
labeled by $L$, and the greatest end, labeled by $A$.
\item an \textbf{index function} $\mathbf{i}_{L,A} : \Theta_L (A) \to \N$, which associates to each point $P \in \Theta_L (A)$ the smallest common denominator of the exponents
of a Newton-Puiseux root of $A$ which are strictly less than $\mathbf{e}_{L,A}(P)$.
\end{itemize}
\end{definition}
Let us consider now the case of a  curve with several branches. In order to construct the Eggers-Wall tree in this case, one needs to know not only 
the characteristic exponents of its branches, but also the \emph{exponent  of coincidence} 
of its pairs of branches:

\begin{definition}
If $A$ and $B$ are two distinct branches of $C$, then their \textbf{exponent of coincidence relative} to $L$ is defined by:
$$k_L (A, B) := \max\{\ord_x (\eta_A - \eta_B) \}, $$
where $\eta_A,\eta_B\in\C[[x^{1/n}]]$ vary among the Newton-Puiseux roots of $A$ and $B$, respectively.  
\end{definition}

\begin{definition}
Let $C$ be a germ of curve on $(S,O)$. 
Let us denote by $\mathcal{I}_C$ the set of
branches of $C$ which are different from $L$. The Eggers-Wall tree $\Theta_L (C)$ of $C$
relative to $L$ is the rooted tree obtained as the quotient of the disjoint union of the individual Eggers-Wall trees $\Theta_L (A), A \in \mathcal{I}_C$, by the following equivalence relation. If $A, B \in\mathcal{I}_C$,
then we glue  $\Theta_L (A)$ with $\Theta_L (B)$ along the initial segments $\mathbf{e}^{-1}_{L,A}([0, k_L (A, B)])$
and $\mathbf{e}^{-1}_{L,B}([0, k_L (A, B)])$ by:
$$\mathbf{e}^{-1}_{L,A}(\alpha) \sim \mathbf{e}^{-1}_{L,B}(\alpha),\text{ for all } \alpha \in [0, k_L (A, B)].$$
One endows $\Theta_L (C)$ with the \textbf{exponent function} $\mathbf{e}_L : \Theta_L (C) \to [0,\infty]$ and the index function
$\mathbf{i}_L : \Theta_L (C) \to \N$ induced by the initial exponent functions $\mathbf{e}_{L,A}$ and $\mathbf{i}_{L,A}$ respectively,
for $A$ varying among the irreducible components of $C$ different from $L$. The tree $\Theta_L (L)$ is the
trivial tree with vertex set a singleton whose element is labelled by $L$. If $L$ is an irreducible
component of $C$, then the marked point $L \in \Theta_L (L)$ is identified with the root of $\Theta_L (L)$ for
any $A \in \mathcal{I}_C$. The set of marked points of $\Theta_L (C)$ is the union of the set of marked points of the
Eggers-Wall tree of the branches of $C$ and of the set of ramification points of $\Theta_L (C)$.
\end{definition}

Again, the fact that in the previous notations $\Theta_L (C), \mathbf{e}_L , \mathbf{i}_L$ we mentioned only the dependency on $L$, and not on the  coordinate system $(x, y)$, comes from  \cite[Proposition 3.10]{GarciaBarroso2019}.
\begin{example}
Consider again the curve of Example \ref{examplelocal}. One has $K_L(A,B)=1/2$ and the Eggers-Wall tree of $C$ relative to $L$ is drawn in Figure \ref{fig:figure1}.

\begin{figure}[h]
	\centering
\begin{tikzpicture}
\draw[thin ](0,0)--(0,4);
\draw[thin ](0,1)--(2,2.5);

\draw[fill ] (0,0)circle(2pt);
\draw[fill ] (0,1)circle(2pt);
\draw[fill ] (0,2)circle(2pt);
\draw[fill ] (0,3)circle(2pt);
\draw[fill ] (0,4)circle(2pt);
\draw[fill ] (2,2.5)circle(2pt);

\node(a)at(-0.3,0){$0$};
\node(a)at(-0.3,1){$\frac{1}{2}$};
\node(a)at(-0.3,2){$\frac{3}{2}$};
\node(a)at(-0.3,3){$\frac{7}{4}$};

\node(a)at(0.2,0.5){$1$};
\node(a)at(0.2,1.5){$1$};
\node(a)at(0.2,2.5){$2$};
\node(a)at(0.2,3.5){$4$};
\node(a)at(1,2){$2$};

\node(a)at(0.3,-0.3){$L$};
\node(a)at(0.3,4.3){$A$};
\node(a)at(2.3,2.5){$B$};

	\end{tikzpicture}
	\caption{Eggers-Wall tree or Example \ref{examplelocal}.}\label{fig:figure1}
\end{figure}

\end{example}
The carousel tree is a  variant of the Eggers-Wall tree, but using all the Newton-Puiseux
roots of $C$, not only one root for each branch. The name was introduced in \cite{Anne} and it is  inspired by the carousel geometrical model for the link of the curve $C$ described in \cite[Section 5.3]{Wall}.

\begin{definition}\label{carouseultree}
Let $C$ be a germ of curve on $\mathcal{S}$. 
Let us denote by $[d_C]$ the set $\{1,\ldots,d_C\}$  and let $\eta_j,j\in[d_C]$ be the Newton-Puiseux roots relative to $L$ of $C$. Consider the map $\ord_x\colon
  [d_C]\times[d_C]\to \Q\cup\{\infty\}$, $(j,k)\mapsto \ord_x(\eta_j-\eta_k)$. 
    The map $\ord_x$ has the property that $\ord_x(j,l) \geq \min \{\ord_x(j,k),\ord_x(k,l)\}$ for any triple $j,k,l$.   So for any $q\in \Q\cup\{\infty\}$, the relation on the set $[d_C]$ given by $j\sim_q k\Leftrightarrow \ord_x(j,k)\ge q$ is an equivalence relation.
    Name the elements of the set $\ord_x([d_C]\times[d_C])\cup\{0\}$ in ascending order: $0=q_0<q_1<\dots<q_r=\infty$.
  For each $i=0,\dots,r$ let $G_{i,1},\dots,G_{i,\mu_i}$ be the equivalence classes for the relation $\sim_{q_i}$.
  So $\mu_r=d_C$ and the sets $G_{r,j}$ are singletons while $\mu_0=\mu_1=1$ and $G_{0,1}=G_{1,1}=[d_C]$.
  We form a tree with these equivalence classes $G_{i,j}$ as vertices and edges given by inclusion relations: there is an edge between $G_{i,j}$ and $G_{i+1,k}$ if $G_{i+1,k}\subseteq G_{i,j}$. 
  The vertex $G_{0,1}$ is the root of this tree and the singleton sets $G_{r,j}$ are the leaves.
  We weight each vertex with its corresponding $q_i$. The \textbf{carousel tree relative} to $L$ is the tree obtained from this tree by suppressing valency 2 vertices:  we remove each such vertex and amalgamate its two adjacent edges into one edge. 
  
\end{definition}

   We will describe how one gets the Eggers-Wall tree from the carousel tree. This process is essentially the same process described in \cite[Lemma 3.1]{Anne}.
   At any vertex $v$ of the carousel tree we have a weight $q_v$ which is one of the  $q_i$'s. 
   Let $d_v$ be the denominator of the $q_v$ when $q_v$ is written as a quotient of coprime integers.
  
  The process of obtaining the Eggers-Wall tree from the carousel tree is  an induction process in $i$.
  First, we label the edge between $G_{0,1}$ and $G_{1,1}$ by 1.
  The subtrees cut off above $G_{1,1}$ consist of groups of $d_{G_{1,1}}$ isomorphic trees, with possibly one additional tree.
  We label the edge connecting $G_{1,1}$ to this additional tree, if it exists, with $1$,  and then delete all but one from each group of $d_{G_{1,1}}$ isomorphic trees. 
  Finally, we label the remaining edges contain $G_{1,1}$  with $\lcm\{d_{G_{1,1}},1\}$.
  
  Inductively, let $v$ be a vertex with weight $q_i$.  Let $v'$ be the adjacent vertex below $v$ along the path from $v$ up to the root vertex and let $l_{vv'}$ be the label of the edge between $v$ and $v'$. 
The subtrees cut off above $v$ consist of groups of $\frac{\lcm\{d_{v},l_{vv'}\}}{l_{vv'}} $ isomorphic trees, with possibly one additional tree.
We label the edge connecting $v$ to this additional tree, if it exists, with  $l_{vv'}$,  and then delete all but one from each group of $\frac{\lcm\{d_{v},l_{vv'}\}}{l_{vv'}} $   isomorphic trees below $v$.  
 Finally, we label the remaining edges contain $v$ with $\lcm\{d_{v},l_{vv'}\}$.

The resulting tree, with the $q_v$ labels at vertices and the extra label on the edges is easily recognized
as the Eggers-Wall tree relative to $L$ of $C$.

\begin{example}
Figure \ref{fig:figure2}  illustrates the above process for the Example \ref{examplelocal}.

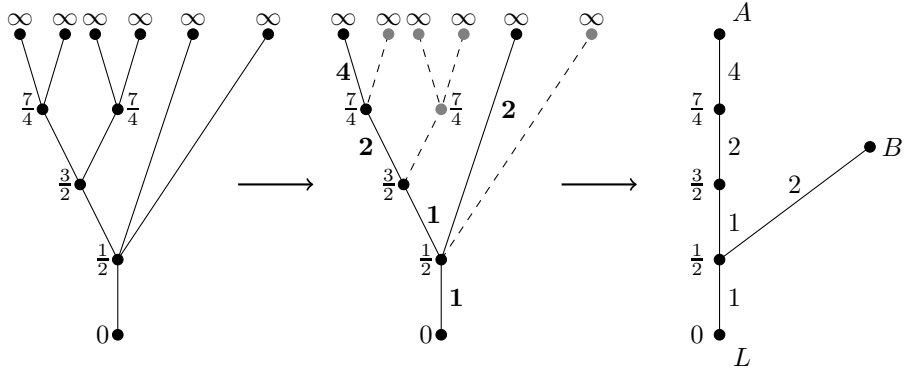
\begin{figure}[h]
	\centering
	\begin{tikzpicture}

\draw[thin ](0,0)--(0,1);
\draw[fill ] (0,0)circle(2pt);
\draw[fill ] (0,1)circle(2pt);

\draw[thin ](0,1)--(1,4);
\draw[fill ] (1,4)circle(2pt);
\node(a)at(1,4.2){$\infty$};

\draw[thin ](0,1)--(2,4);
\draw[fill ] (2,4)circle(2pt);
\node(a)at(2,4.2){$\infty$};

\draw[thin ](0,1)--(-0.5,2);
\draw[thin ](-0.5,2)--(0,3);
\draw[thin ](-0.5,2)--(-1,3);

\draw[fill ] (-0.5,2)circle(2pt);
\draw[fill ] (0,3)circle(2pt);
\node(a)at(0.2,3){$\frac{7}{4}$};

\node(a)at(-1.2,3){$\frac{7}{4}$};

\node(a)at(-0.7,2){$\frac{3}{2}$};

\node(a)at(-0.2,1){$\frac{1}{2}$};

\node(a)at(-0.2,0){$0$};

\draw[thin ](-1.3,4)--(-1,3);
\draw[fill ] (-1.3,4)circle(2pt);
\draw[fill ] (-1,3)circle(2pt);
\node(a)at(-1.3,4.2){$\infty$};

\draw[thin ](-0.7,4)--(-1,3);
\draw[fill ] (-0.7,4)circle(2pt);
\node(a)at(-0.7,4.2){$\infty$};

\draw[thin ](-0.3,4)--(0,3);
\draw[fill ] (-0.3,4)circle(2pt);
\node(a)at(-0.3,4.2){$\infty$};

\draw[thin ](0.3,4)--(0,3);
\draw[fill ] (0.3,4)circle(2pt);
\node(a)at(0.3,4.2){$\infty$};

\draw[thick,>-stealth,->](1.6,2)--+(1.0,0);

\begin{scope}[xshift=4.3cm]
\draw[thin ](0,0)--(0,1);
\draw[fill ] (0,0)circle(2pt);
\draw[fill ] (0,1)circle(2pt);
\node(a)at(-0.2,0){$0$};

\draw[thin ](0,1)--(1,4);
\draw[fill ] (1,4)circle(2pt);
\node(a)at(1,4.2){$\infty$};

\draw[dashed ](0,1)--(2,4);
\draw[fill=gray, color=gray ] (2,4)circle(2pt);
\node(a)at(2,4.2){$\infty$};

\draw[thin ](0,1)--(-0.5,2);

\draw[dashed ](-0.5,2)--(0,3);
\draw[thin ](-0.5,2)--(-1,3);

\draw[fill ] (-0.5,2)circle(2pt);
\draw[fill=gray, color=gray ] (0,3)circle(2pt);
\node(a)at(0.2,3){$\frac{7}{4}$};

\node(a)at(-1.2,3){$\frac{7}{4}$};

\node(a)at(-0.7,2){$\frac{3}{2}$};

\node(a)at(-0.2,1){$\frac{1}{2}$};

\draw[thin ](-1.3,4)--(-1,3);
\draw[fill ] (-1.3,4)circle(2pt);
\draw[fill ] (-1,3)circle(2pt);
\node(a)at(-1.3,4.2){$\infty$};
\node(a)at(-1.3,3.5){$\mathbf 4$};

\draw[dashed ](-0.7,4)--(-1,3);
\draw[ fill=gray, color=gray ] (-0.7,4)circle(2pt);
\node(a)at(-0.7,4.2){$\infty$};

\draw[dashed ](-0.3,4)--(0,3);
\draw[fill=gray, color=gray ] (-0.3,4)circle(2pt);
\node(a)at(-0.3,4.2){$\infty$};

\draw[dashed ](0.3,4)--(0,3);
\draw[fill=gray, color=gray ] (0.3,4)circle(2pt);
\node(a)at(0.3,4.2){$\infty$};

\node(a)at(-1,2.5){$\mathbf 2$};

\node(a)at(0.9,3){$\mathbf 2$};
\node(a)at(-0.1,1.6){$\mathbf 1$};

\node(a)at(0.2,0.5){$\mathbf 1$};

\draw[thick,>-stealth,->](1.6,2)--+(1,0);

\end{scope}

\begin{scope}[xshift=8cm]
\draw[thin ](0,0)--(0,4);
\draw[thin ](0,1)--(2,2.5);

\draw[fill ] (0,0)circle(2pt);
\draw[fill ] (0,1)circle(2pt);
\draw[fill ] (0,2)circle(2pt);
\draw[fill ] (0,3)circle(2pt);
\draw[fill ] (0,4)circle(2pt);
\draw[fill ] (2,2.5)circle(2pt);

\node(a)at(-0.3,0){$0$};
\node(a)at(-0.3,1){$\frac{1}{2}$};
\node(a)at(-0.3,2){$\frac{3}{2}$};
\node(a)at(-0.3,3){$\frac{7}{4}$};

\node(a)at(0.2,0.5){$1$};
\node(a)at(0.2,1.5){$1$};
\node(a)at(0.2,2.5){$2$};
\node(a)at(0.2,3.5){$4$};
\node(a)at(1,2){$2$};

\node(a)at(0.3,-0.3){$L$};
\node(a)at(0.3,4.3){$A$};
\node(a)at(2.3,2.5){$B$};

\end{scope}

	\end{tikzpicture}
	\caption{From the carousel tree to the Eggers-Wall tree. }\label{fig:figure2}
\end{figure}	   
\end{example}

\section{Lipschitz geometry at infinity determines topological type}\label{lipimplies}

In this section, we define the Lipschitz tree at infinity of a complex algebraic plane curve. Then we prove the equivalence of (\ref{it2}) and (\ref{it3}) and  that (\ref{it1}) implies (\ref{it2}) of Theorem \ref{atinfinity}. 

To define the Lipschitz tree at infinity of a complex algebraic plane curve we recall the basic vocabulary of resolution of singularities. Let $(C,p)\subset (\mathcal{S},p)$ be a germ of a singular complex curve in a smooth surface $\mathcal{S}$.
We remember that the blowing up of $\mathcal{S}$ with centre $p$ produces a smooth surface $\mathcal{S}_1$, a holomorphic map  $\pi_1:\mathcal{S}_1\to\mathcal{S}$ such that $\pi_1:\mathcal{S}_1\backslash\pi_1^{-1}(p)\to S\backslash \{p\}$ is biholomorphic, the \textbf{exceptional curve} $E_1=\pi_1^{-1}(p)$, and the \textbf{strict transform} $C_1$ which is the topological closure  $\overline{\pi_1^{-1}(C\backslash\{p\})}$.  
The map $\pi_1$ is called the \textbf{blowing up} of $\mathcal{S}$ with centre $p$.
A \textbf{good minimal resolution} of $C$ is a map $\pi:\mathcal{S}_n\to\mathcal{S}$ which is a composite of finite and minimal sequence of  blowing ups $\pi_i:\mathcal{S}_i\to\mathcal{S}_{i-1}$ such that the strict transform $C_n=\overline{\pi^{-1}(C\backslash\{p\})}$ is smooth and meets the exceptional curves $\pi^{-1}(p)=E_1\cup E_2\cup\cdots\cup E_n$ transversely at  regular points.

\begin{definition}\label{liptree}
Let $C$ be a complex algebraic plane curve, $p_1,\ldots,p_m$ its points at infinity and let $B_1^{(j)},\ldots,B_{k_j}^{(j)}$ be the branches of $(\widetilde{C},p_j)$.
A \textbf{good minimal resolution} of $(\widetilde{C}\cup L_\infty,p_1)$ produces a smooth surface $S_{(1)}$, a projection $\pi_{(1)}:S_{(1)}\to \P$, a sequence of exceptional curves $E_1^{(1)},\ldots,E_{r_1}^{(1)}$ and strict transform curves $\mathcal{B}_1^{(1)},\ldots, \mathcal{B}_{k_1}^{(1)}$ of the branches $B_1^{(1)},\ldots, B_{k_1}^{(1)}$ and the strict transform $\mathcal{L}_\infty$ of the line at infinity $L_\infty$.
Then, we resolve the strict transform $C^{(1)}=\pi^{-1}_{(1)}(\widetilde{C})$ at the singular point $\pi_{(1)}^{-1}(p_2)$.
We repeat this process for all points at infinity of $C$. 

The \textbf{Lipschitz tree at infinity} of $C$  is rooted tree with vertices $V^{(j)}_k$ corresponding to the curves $E^{(j)}_k$ labeled with its  self-intersection number,  arrow vertices $W^{(j)}_i$ corresponding to the branches  $\mathcal{B}_i^{(j)}$ not labeled and a root corresponding to the strict transform $\mathcal{L}_\infty$ of the line at infinity. 
We put an edge joining vertices if and only if the corresponding curves intersect each other.
\end{definition}

   \begin{example}\label{paracusp}
 The Lipschitz tree at infinity  of the complex algebraic plane curve defined by $(y-x^2)(y^3-x)=0$ is drawn in Figure \ref{fig:Liptree}.
 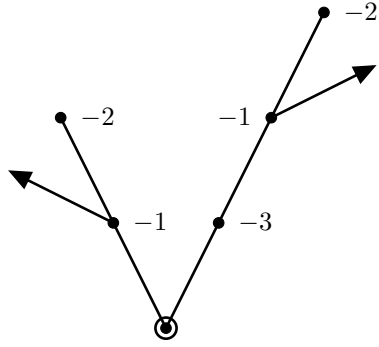
\begin{figure}[h]
     \centering
 \begin{tikzpicture}[line cap=round,line join=round,>=triangle 45,x=0.7cm,y=0.7cm]
\clip(-6,-1) rectangle (6,7);
\draw [line width=1pt] (0,0)-- (-1,2);
\draw [line width=1pt] (0,0)-- (1,2);
\draw [line width=1pt] (-1,2)-- (-2,4);
\draw [->,line width=1pt] (-1,2)-- (-3,3);
\draw [line width=1pt] (1,2)-- (2,4);
\draw [line width=1pt] (2,4)-- (3,6);
\draw [->,line width=1pt] (2,4)-- (4,5);
\draw [line width=1pt] (0,0) circle (4pt);
\begin{scriptsize}
\draw [fill=black] (0,0) circle (2pt);
\draw [fill=black] (-1,2) circle (2pt);
\draw[color=black] (-0.3,2) node {\normalsize $-1$};
\draw [fill=black] (1,2) circle (2pt);
\draw[color=black] (1.7,2) node {\normalsize $-3$};
\draw [fill=black] (-2,4) circle (2pt);
\draw[color=black] (-1.3,4) node {\normalsize $-2$};
\draw [fill=black] (2,4) circle (2pt);
\draw[color=black] (1.3,4) node {\normalsize $-1$};
\draw [fill=black] (3,6) circle (2pt);
\draw[color=black] (3.7,6) node {\normalsize $-2$};
\end{scriptsize}
\end{tikzpicture}
\caption{Lipschitz tree at infinity of Example \ref{paracusp}. }
     \label{fig:Liptree}
\end{figure}
\end{example}

We point out that the Lipschitz tree at infinity of $C$ is obtained as the quotient of the disjoint union of the individual dual resolution graph of minimal good resolutions of $(\widetilde{C}\cup L_\infty,p_i)$, by identifying
all vertices corresponding to the strict transform of the line at infinity and putting it as the root. 
We recall that by \cite[Theorem 8.1.7]{Wall}, the isomorphism class of the  dual resolution graph of a minimal good resolutions of  germ  of complex curve at singular point   determines and it is determined by its topological type.
This explain the equivalence $(\ref{it2})\Leftrightarrow (\ref{it3})$ of Theorem \ref{atinfinity}.

For the implication $(1)\Rightarrow (2)$ of Theorem \ref{atinfinity}, we introduce the asymptotic notations of Bachmann-Landau  which are convenient for  study of Lipschitz geometry (see \cite{knuth76bigomicron} for a historical survey about these notation).

\begin{definition}
Let $f,g:(0,\infty)\to (0,\infty)$ be two functions. 
We say  that
\begin{enumerate}
    \item \textbf{$f$ is big-Theta of $g$}, and we write  $f(t)=\Theta(g(t))$, if there exists $R_0>0$ and a constant  $c>0$  such that $\displaystyle\frac{1}{c}g(t)\leq f(t)\leq cg(t)$ for all  $t>R_0$.

    \item \textbf{$f$ is small-o of $g$}, and we write  $f(t)=o(g(t))$, if  $\displaystyle \limsup\limits_{t\rightarrow \infty}\frac{f(t)}{g(t)}=0$.
\end{enumerate}
\end{definition}
  
Let $[a:b:0]$ be a point at infinity  of a complex algebraic plane curve $C$.
The linear subspace spanned by $(a,b)$ in $\C^2$ is the \textbf{tangent line at infinity} to $C$  associated with $[a:b:0]$ (see \cite{Fernandes2019} and \cite{CTinfinity}).

\begin{example}\label{example}
Consider the polynomial $f(x,y)= y^2x-y$, and let $C_\lambda$ be the  complex algebraic plane curve with equation $f(x,y)+\lambda=0$ for $\lambda\in\C$. One has $\widetilde{f}(x,y,z)=y^2x-yz^2+\lambda z^3$, and the points at infinity of $C_\lambda$ are $[1:0:0]$ and $[0:1:0]$ and the tangent lines at infinity to $C_\lambda$ are the coordinates axis. 
\end{example}

\begin{lemma}\label{semnome}
Let $C$ be a complex algebraic plane curve, and let $P:\C^2\to \C$ be a linear projection  whose kernel does not contain any tangent line   at infinity to $C$.
Then there exist a compact set $K$ and  a constant $M>1$ such that for each $u,u' \in C\backslash K $, there is an arc $\tilde \alpha$ in  $C\backslash K$ joining $u$ to a point $u''$ with $P(u'')=P(u')$ and
$$ d(u,u') \leq \len(\tilde \alpha) + d(u'',u') \leq M d(u,u').$$ 

\end{lemma}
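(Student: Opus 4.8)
The plan is to work in coordinates adapted to $P$ and reduce the statement to a quantitative local analysis near each point at infinity. After a linear change of coordinates in $\C^2$ we may assume $P(x,y) = x$, so that the hypothesis says that no point at infinity of $C$ is $[0:1:0]$; equivalently, the line $L_\infty$ is not tangent to $\widetilde C$ at any of its points at infinity along the direction $[0:1:0]$, i.e. for each point at infinity $p_j = [a_j:b_j:0]$ we have $a_j \neq 0$. Geometrically this means that for large $R$, the set $C \cap \{|x| = R\}$ consists of finitely many small ``packets'' of points, and along the branches of $C$ at infinity the coordinate $x$ grows comparably to the Euclidean norm: on each branch, after a Newton--Puiseux parametrization at infinity of the form $x = s^{-m}$, $y = c s^{-\ell}(1 + o(1))$ with $\ell \le m$ (the inequality being exactly the condition $a_j \neq 0$), one has $\|(x,y)\| = \Theta(|x|)$ and the branch is, outside a compact set, a Lipschitz graph over the $x$-line with uniformly bounded Lipschitz constant. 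I would first make this precise: there is a compact $K_0$ so that $C \setminus K_0$ is a disjoint union of finitely many ``ends,'' each end being the image of a Puiseux arc as above, and on each end $|x|$ restricted to it is proper, and $d(u,u') \ge c_0|x(u) - x(u')|$ as well as $\|u\| = \Theta(|x(u)|)$.

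Next I would construct the arc $\tilde\alpha$. Fix $u, u' \in C \setminus K$ with $K \supset K_0$ to be enlarged below. Let $r = |x(u)|$. Travel from $u$ along its own end of $C$, increasing or decreasing $|x|$ monotonically, until reaching a point $u''$ on the \emph{same} end with $x(u'') = x(u')$; this is possible once $K$ is large enough that the relevant end contains the full circle-fiber over $x(u')$, using properness of $|x|$ on each end. Because the end is a uniformly Lipschitz graph over the $x$-line, the length of this arc $\tilde\alpha$ satisfies $\len(\tilde\alpha) \le C_1 |x(u) - x(u'')| = C_1|x(u) - x(u')|$. The lower bound $d(u,u') \le \len(\tilde\alpha) + d(u'',u')$ is immediate from the triangle inequality since $\tilde\alpha$ joins $u$ to $u''$. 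For the upper bound we estimate
$$ \len(\tilde\alpha) + d(u'',u') \le C_1 |x(u) - x(u')| + d(u'',u').$$
Now $|x(u) - x(u')| \le 2 d(u,u')$ trivially (projections are $1$-Lipschitz, and $|x(u)-x(u')| \le \|u - u'\|$; the factor is just slack), so it remains to bound $d(u'', u')$ by a multiple of $d(u,u')$.

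The one genuine point requiring care — and the step I expect to be the main obstacle — is bounding $d(u'', u')$. Here $u''$ and $u'$ lie in the same fiber $\{x = x(u')\}$, but possibly on \emph{different} ends of $C$; a priori they could be far apart even though $u$ and $u'$ are close. The resolution is that if $u$ and $u'$ are close compared to $\|u\|$, they must lie on ends having the same point at infinity $p_j$, and within a single fiber over a large value of $x$ the points of $C$ lying over the same $p_j$ form a cluster of diameter $o(\|u\|)$, while distinct clusters (distinct $p_j$, or the same $p_j$ but coming from branches with small coincidence) are separated by distance $\Theta(\|u\|)$; hence $d(u,u')$ small forces $u,u'$ into the same cluster and then $d(u'',u') = O(d(u,u'))$ follows because $u''$ moved from $u$ monotonically along a graph and stays in the same cluster-fiber. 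To make this uniform one chooses the compact $K$ large enough that all these $\Theta$-estimates hold with fixed constants; a clean way to organize it is a compactness/contradiction argument: if no $M$ worked, one gets sequences $u_n, u'_n \to \infty$ in $C$ with $d(u_n,u'_n)$ of smaller order than the required lower bound, pass to the tangent-cone / normalized picture at infinity (rescale by $1/\|u_n\|$), and derive a contradiction with the local Puiseux description of the ends. I would write the detailed version via explicit Puiseux estimates rather than the contradiction argument, since the constants $M$ and the compact $K$ are then manifestly effective. Finally, combining the two displayed chains gives $d(u,u') \le \len(\tilde\alpha) + d(u'',u') \le M d(u,u')$ for a uniform $M > 1$, as required.
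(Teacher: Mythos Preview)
Your setup is right (reduce to $P(x,y)=x$, Puiseux description of the ends, bounded-slope covering onto $\{|x|>R\}$), but you have misidentified the hard step, and the argument you propose for it does not actually give what you need.

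The step you call ``the one genuine point requiring care'' --- bounding $d(u'',u')$ by a multiple of $d(u,u')$ --- is in fact immediate from the triangle inequality once $\len(\tilde\alpha)$ is controlled. Since $\tilde\alpha$ joins $u$ to $u''$,
\[
d(u'',u') \;\le\; d(u'',u) + d(u,u') \;\le\; \len(\tilde\alpha) + d(u,u'),
\]
and you have already argued that $\len(\tilde\alpha)\le C_1\,|P(u)-P(u')|\le C_1\,d(u,u')$. Hence $\len(\tilde\alpha)+d(u'',u')\le (2C_1+1)\,d(u,u')$. This is exactly how the paper closes the estimate; no fiberwise ``cluster'' analysis is needed.

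By contrast, your cluster argument does not yield the required bound. Saying that the points of $C$ in the fiber $\{x=x(u')\}$ sharing a given point at infinity form a packet of diameter $o(\|u\|)$ only gives $d(u'',u')=o(\|u\|)$; when $d(u,u')$ is much smaller than the packet diameter (which happens, e.g., between two Puiseux sheets with coincidence exponent $>1$), this says nothing like $d(u'',u')=O(d(u,u'))$. The sentence ``$d(u'',u')=O(d(u,u'))$ follows because $u''$ moved from $u$ monotonically along a graph'' is exactly the triangle inequality above, not a consequence of clustering.

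Two smaller points. First, an end is a $d$-fold cover of $\{|x|>R\}$, not a single-valued Lipschitz graph over the $x$-line; what is true (and what the paper proves) is that $P$ restricted to $C\setminus K$ is a covering map whose derivative is bounded above and below by positive constants, so lengths of lifts are comparable to lengths of base paths. Second, your ``increase or decrease $|x|$ monotonically'' does not produce a $u''$ with $x(u'')=x(u')$ when $\arg x(u)\ne\arg x(u')$. The clean fix, which also matches the paper, is to choose any short path $\alpha$ in $\C\setminus D_R$ from $P(u)$ to $P(u')$ with $\len(\alpha)\le C\,|P(u)-P(u')|$ and \emph{lift} it via the covering $P$ to an arc $\tilde\alpha$ in $C\setminus K$ starting at $u$; its endpoint is the desired $u''$.
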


\begin{proof}
After a linear change of coordinates if necessary, we may assume  that $P$ is the projection on the first coordinate  and that the $y$-axis is not a tangent line at infinity to $C$.  
Let $[1:a_1:0],\ldots,[1:a_m:0]$ be the points at infinity of $C$. For each $i$, let  $B_{i1},\ldots,B_{ik_i} $ be the branches of $(\widetilde{C},[1:a_i:0])$.

The open set $U=\{[x:y:z]\in \P^2:x\neq0\}$ contains all the points at infinity of $C$, so we can use   the coordinate chart $\varphi:U\to\C^2$  defined by $\varphi([x:y:z])=(z/x,y/x)$ to obtain Newton-Puiseux parametrization of the branch $\varphi(B_{ij})$ for each $i$. 
Let  $\epsilon>0$ sufficiently small such that there  exists Newton-Puiseux parametrization $\gamma_{ij}:D_\epsilon\to\C^2$ of $\varphi(B_{ij})$ given by
$$\gamma_{ij}(w)=(w^{d_{ij}},a_i+v_{ij}(w)),$$
where $D_\epsilon$ is the open disk of radius $\epsilon$ centered at the origin and $v_{ij}\in\C\{w\}, v_{ij}(0)=0$. 
Let $\Gamma_{ij}:D_\epsilon\backslash\{0\}\to\C^2$ given by $$\Gamma_{ij}(w)=(\iota^{-1}\circ\varphi^{-1}\circ\gamma_{ij})(w)=\left(\frac{1}{w^{d_{ij}}},\frac{a_i+v_{ij}(w)}{w^{d_{ij}}}\right).$$

 We will prove that  the compact $K=C\backslash \bigcup_{ij} \Gamma_{ij}(D_\epsilon\backslash\{0\})$ satisfies the desired conditions.

We claim that there exists a constant $c>0$  such that $C\backslash K$ is a subset of the cone $\{(x,y)\in \C^2;|y|\leq c|x|\}$. Moreover, $c$ may be chosen such that the tangent space of $C\backslash K$ at a point $p$, denoted by $T_pC$, is also a subset of the same cone.

The first part of this statement is easy to check. In particular, it follows that  $P|_{\Gamma_{ij}(D_\epsilon\backslash\{0\})}$ is a covering map for all $i,j$.
 Differentiating $\Gamma_{ij}$ gives 
$$\Gamma'_{ij}(w)=\left(-\frac{d_{ij}}{w^{d_{ij}+1}},\frac{wv'_{ij}(w)-d_{ij}v_{ij}(w)}{w^{d_{ij}+1}}-a_i\frac{d_{ij}}{w^{d_{ij}+1}}\right).$$

 Thus the points $(x,y)\in T_{\Gamma_{ij}(w)}C$  satisfies $|y-a_ix|\leq\eta_{ij} |x|\Rightarrow |y|\leq(\eta_{ij}+|a_i|) |x|$ where $\eta_{ij}=\sup\left|\frac{wv'_{ij}(w)-d_{ij}v_{ij}(w)}{d_{ij}}\right|$. 
 Now, putting $c=\max_{ij}\{\eta_{ij}+|a_i|\}$ we have 
$$T_p C \subset \{(x,y)\in \C^2;|y|\leq c|x|\} \text{ for all } p\in C\backslash K,$$
as claimed.

Suppose $u,u' \in C\backslash K$ are arbitrary. Let $i_0,j_0,i'_0,j'_0$ such that $u\in \Gamma_{i_0j_0}(D_\epsilon\backslash\{0\})$ and $u'\in \Gamma_{i'_0j'_0}(D_\epsilon\backslash\{0\})$ and suppose that $1/\epsilon^{d_{i_0j_0}}\leq1/\epsilon^{d_{i'_0j'_0}}$. 
Let $R=1/\epsilon^{d_{i_0j_0}}$ and choose a path $\alpha:[0,1]\to \C\backslash D_R$ such that $\alpha(0)=P(u),\alpha(1)=P(u')$ and $\len(\alpha)\leq\pi R |P(u)-P(u')|$.
Consider the lifting $\tilde \alpha$ of $\alpha$ by $P|_{ \Gamma_{i_0j_0}(D_\epsilon\backslash\{0\})}$ with origin $u$ and let $u''$ be its end.  
We obviously have                                   
$$d(u,u')\leq  \len( \tilde\alpha) + d(u',u'')\,.$$   

On the other hand, since $P$ is linear, $dP_p=P|_{T_pC}$.
Thus 
\begin{equation*}
\frac{1}{\sqrt{1+c^2}}\leq||dP_p||\leq 1 \text{ for all } p\in C\backslash K.
\end{equation*}

In particular, $\len( \tilde\alpha) \leq \sqrt{1+c^2}\len(\alpha)\leq \pi R\sqrt{1+c^2}|P(u)-P(u')|$, as $|P(u)-P(u')| \leq d(u,u')$, we obtain
$$  \len( \tilde\alpha) \leq \pi R\sqrt{1+c^2} d(u,u').$$

If we join the segment $[u,u']$ to $ \tilde\alpha$ at $u$, we have a curve from $u'$ to $u''$, so   $d(u',u'')  \leq (1+\pi R\sqrt{1+c^2}) d(u,u')$.  
Finally, 
$$ \len( \tilde\alpha) + d(u',u'')  \leq (1+2\pi R\sqrt{1+c^2}) d(u,u'),$$
and the constant  $M=1+2  \pi R\sqrt{1+c^2} $ satisfies the desired conditions. 
\end{proof}

\begin{remark}\label{bilip}
In the above lemma, we prove that $P|_{C\backslash K}:C\backslash K\to \C\backslash P(K)$ is a covering map. Moreover, $P|_{C\backslash K}$ has derivative bounded above and below by positive constants. In particular, for a non-constant arc $\alpha$ the quotient $$\len(\tilde\alpha)/\len(\alpha )$$ is bounded above and below by positive constants.   

\end{remark}

The demonstration technique of $(\ref{it1})\Rightarrow (\ref{it2})$ the Theorem \ref{atinfinity} is  similar to  the  case of germ of complex curves in \cite{Anne}.
In particular, it is based on a so-called “bubble trick” argument.

\begin{proof}[Proof of $(\ref{it1})\Rightarrow (\ref{it2})$ of Theorem \ref{atinfinity}] 
We first prove that the Lipschitz geometry at infinity gives us the number of points at infinity.
Let $f\in \C[x,y]$ be a polynomial that defines $C$  which does not have  multiple factors. Let $n=\deg f$, then by a linear change of coordinates if necessary, we can assume that the monomial $y^n$ has  coefficient equal to 1 in $f$. 

The points at infinity of $C$ are the points $[x:y:0]\in\P^2$ satisfying $f_n(x,y)=0$, where $f_n$ denotes the homogeneous polynomial composed by the monomials in $f$ of degree $n$, so $[0:1:0]$ is not a point at infinity of $C$.

We claim that there are constant $c>0$ and an open Euclidean ball $B_{R_0}(0)$ of radius $R_0$ centered at the origin such that $|y|\leq c|x|$ for all $(x,y)\in C\backslash B_{R_0}(0)$.
Indeed, otherwise, there exists a sequence $\{z_k=(x_k,y_k)\}\subset C$ such that $\lim\limits_{k\to+\infty}\|z_k\|=+\infty$ and $|y_k|> k|x_k|$. 
Thus, taking a subsequence,  one can suppose that $\lim\limits_{k\to+\infty}\frac{y_k}{|y_k|}=y_0$ for some $y_0$ such that $|y_0|=1$.
Since $\frac{|x_k|}{|y_k|}< \frac{1}{k}$, $\lim\limits_{k\to+\infty}\frac{z_k}{\|z_k\|}=(0,y_0)$.
     On the other hand,
 \begin{eqnarray*}
0 = f(z_k)  =  f\left(\|z_k\| \frac{z_k}{\|z_k\|}\right) 
=
\|z_k\|^n \sum_{i=0}^{n}\frac{1}{\|z_k\|^{n-i}}f_{i} \left(\frac{z_k}{\|z_k\|}\right),
\end{eqnarray*}
 where $f_i$ denotes the homogeneous polynomial composed by the monomials in $f$ of degree $i$. This implies that
\begin{eqnarray*}
0 = f(z_k)= \sum_{i=0}^{n}\frac{1}{\|z_k\|^{n-i}}f_{i} \left(\frac{z_k}{\|z_k\|}\right),
\end{eqnarray*}
Letting $k \to \infty$ yields $f_n(0,y_0) = 0$, which implies that $[0:1:0]$ is a point at infinity of $C$, this is a contradiction. 
Therefore, the claim is true.

 Now, let  $[1:a_j:0], j=1,\ldots,m\leq n$ be the points at infinity of $C$. 
 We define cones
 $$V_j:=\{(x,y)\in\C^2:|y-a_jx|\leq\epsilon |x|\}$$
where $\epsilon>0$ is small enough that the cones are disjoint except at $0$. 
Then increasing $R_0>0$, if necessary, 

\[ C\backslash B_{R_0}(0)\subset  \bigcup_{j=1}^{m}V_j.\]

     Indeed, otherwise, there exists a sequence $\{z_k=(x_k,y_k)\}\subset C$ such that $\lim\limits_{k\to+\infty}\|z_k\|=+\infty$ and $|y_k-a_jx_k|> \epsilon|x_k|$ for all $j=1,\ldots,m$.
     Again, since $\|z_k\|\to+\infty$ as $k\to\infty$, we have
     $$\lim_{k\to\infty}f_n \left(\frac{z_k}{\|z_k\|}\right)=0.$$
     
     On the other hand, writing $f_n(x,y)=\prod_{j=1}^m(y-a_jx)^{d_j}$, where $d_j$ is  a positive integer such that $n=\sum_{1\leq j\leq m} d_j$, we have
$$\left\|f_n \left(\frac{z_k}{\|z_k\|}\right)\right\|=\frac{\prod_{j=1}^{m}|y_k-a_jx_k|^{d_j}}{\|z_k\|^n}\geq\left(\frac{\epsilon|x_k|}{\|z_k\|}\right)^n.$$
But, because of the first claim, we have 
$$\frac{|x_k|}{\|z_k\|}=\frac{1}{\sqrt{1+\left|\frac{y_k}{x_k}\right|^2}}\geq \frac{1}{\sqrt{1+c^2}},$$
which derives a contradiction.

We denote by $\mathcal{C}_j$ the part of $C\backslash B_{R_0}(0)$ inside $V_j$. Now, let $K,K'\subset \C^2$ be compact sets such that there is a bilipschitz map $\Phi:C\backslash K\to C'\backslash K'$. 
 Let  $[1:a'_j:0], j=1,\ldots,m'$ be the points at infinity of $C'$. We repeat the above arguments for $C'$, then increasing $R_0>0$, if necessary, 
$$C'\backslash B_{R_0}(0)\subset \bigcup_{j=1}^{m'}V'_j,\:\text{ where }\:V'_j:=\{(x,y)\in\C^2: |y-a'_jx|\leq\epsilon|x|\}.$$
Likewise, denote by $\mathcal{C}'_j$ the set $(C'\backslash B_{R_0}(0))\cap V'_j$.  
We have $\Phi(C\backslash B_R(0))\subset C'\backslash B_{h(R)}(0) $ with  $h(R)=\Theta(R) $. Since $\dist(\mathcal{C}_j\backslash B_R(0),\mathcal{C}_k\backslash B_R(0))=\Theta(R)$ we have
 $$\dist(\Phi(\mathcal{C}_j\backslash B_R(0)),\Phi(\mathcal{C}_k\backslash B_R(0)))=\Theta(R).$$ 

Notice that the sets $\mathcal{C}'_l,l=1,\ldots,m'$ have the following property:  the distance between any two connected components of $\mathcal{C}'_l$ outside a ball of radius $h(R)$ around $0$ is $o(R)$. 
Then, we cannot have $$\Phi(\mathcal{C}_j\backslash B_R(0))\subset \mathcal{C}_l'\backslash B_{h(R)}(0) \text{ and } \Phi(\mathcal{C}_k\backslash B_R(0))\subset \mathcal{C}_l'\backslash B_{h(R)}(0)$$
for $k\neq j$ then $m\leq m'$ and using the inverse $\Phi^{-1}$ we get $m=m'$.

Now, we obtain the topological type of $\widetilde{C}\cup L_\infty$  at the points at infinity. 
Without loss of generality, we can suppose that $[1:a_1:0]=[1:0:0]$ is a point at infinity for $C$.
We extract the characteristic and the coincidence exponents   relative to $L_\infty$ of the curve $(\widetilde{C}\cup L_{\infty},[1:0:0])$
 using the coordinate system and the induced Euclidean metric $d$ on $\mathcal{C}_1$.
Next, we prove that these data determine the  topology type of $(\widetilde{C}\cup L_{\infty},[1:0:0])$.
Finally, we prove that these data can be obtained without using the chosen coordinate system and even using the equivalent metric $d'$ induced by $\Phi$, for this we operate the ``bubble trick".

Let $U=\{[x:y:z]\in \P^2:x\neq0\}$ and consider the coordinate chart $\varphi:U\to\C^2$ defined by $\varphi([x:y:z])=(z/x,y/x)=(u,v)$.
In this local coordinates, $\varphi([1:0:0])$ is the origin and  we have $\ord_v (\widetilde{f}\circ \varphi^{-1})(0,v)=d_1$. 
 Let  $B_{1},\ldots,B_{k_1} $ be the branches of $(\varphi(\widetilde{C}\cap U),0)$.
Every branch of the curve $(\varphi(\widetilde{C}\cap U),0)$ has a Newton-Puiseux parametrization of the form 
$$\gamma_{s}(w)=\left(w^{d_{1s}},\sum_{k> 0} a_{sk} w^k\right),$$
where $d_{1s}$ are positive integers such that $\sum_{s=1}^{k_1} d_{1s}=d_1$.
Then, increasing  $R_0>0$ if necessary, the images of the   maps
$$\displaystyle\Gamma_{s}(w)=(\iota^{-1}\circ\varphi^{-1}\circ\gamma_s)(w)=\left(\frac{1}{w^{d_{1s}}},\frac{1}{w^{d_{1s}}}\sum_{k> 0} a_{sk} w^{k}\right), s=1,\ldots, k_1$$
cover $\mathcal{C}_1$. Therefore, the lines $x=t$ for $t\in (R_0,\infty)$ intersect $\mathcal{C}_1$ in $d_1$ points  
  $p_1(t),\dots,p_{d_1}(t)$ which depend continuously on $t$.
  Denote by $[d_1]$ the set $\{1,\ldots,d_1\}$.
  For each $  j, k \in [d_1]$ with $j < k$, the distance $d(p_j(t),p_k(t))$ has the form $\Theta(t^{1-q(j,k)})$, where $q(j,k) = q(k,j)$ is either a characteristic Puiseux
exponent relative to $L_\infty$ for a branch of the plane curve $(\widetilde{C}\cup L_\infty,[1:0:0])$  or a coincidence exponent relative to $L_\infty$ between two branches  of $(\widetilde{C}\cup L_\infty,[1:0:0])$.
 For $j\in[d_1]$ define $q(j,j)=\infty$.  
 
\begin{lemma} \label{le:curve geometry}The map $q\colon
  [d_1]\times[d_1]\to \Q\cup\{\infty\}$, $(j,k)\mapsto q(j,k)$, determines the  topological type of $(\widetilde{C}\cup L_\infty,[1:0:0])$.
\end{lemma}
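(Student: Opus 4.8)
The plan is to read the topological type off $q$ in three reconstructions: from $q$ recover the carousel tree relative to $L_\infty$ of $\widetilde C$, from the carousel tree recover the Eggers--Wall tree $\Theta_{L_\infty}(\widetilde C)$, and from that tree deduce the embedded topological type of $(\widetilde C\cup L_\infty,[1:0:0])$.

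\textbf{Step 1: $q$ is the order of coincidence of Newton--Puiseux roots relative to $L_\infty$.} In the chart $\varphi$ the line $L_\infty$ is the curve $\{u=0\}$, so the Newton--Puiseux roots relative to $L_\infty$ of $\widetilde C$ are the series $\zeta(u)=\sum_{k>0}a_{sk}\,u^{k/d_{1s}}$ produced by the parametrizations $\gamma_s$, together with their conjugates under the $d_{1s}$-th roots of unity; there are exactly $\sum_{s}d_{1s}=d_1$ of them, say $\zeta_1,\dots,\zeta_{d_1}$, and they are pairwise distinct. Putting $u=1/t$, the $d_1$ points $p_j(t)$ in which the line $x=t$ meets $\mathcal C_1$ are $p_j(t)=(t,\xi_j(t))$ with $\xi_j(t)=t\,\zeta_j(1/t)$, whence
$$d\big(p_j(t),p_k(t)\big)=\big|\xi_j(t)-\xi_k(t)\big|=\Theta\big(t^{\,1-\ord_u(\zeta_j-\zeta_k)}\big).$$
Therefore $q(j,k)=\ord_u(\zeta_j-\zeta_k)$, and since $\ord_u$ is a valuation, $q(j,l)\ge\min\{q(j,k),q(k,l)\}$ for every triple $j,k,l$.

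\textbf{Step 2: from $q$ to the carousel tree, and then to the topological type.} The inequality just established is exactly the hypothesis satisfied by the map $\ord_x$ in Definition~\ref{carouseultree}, and by Step~1 the map $q$ is precisely the order of coincidence of the full family of Newton--Puiseux roots relative to $L_\infty$. Hence running the construction of Definition~\ref{carouseultree} with $q$ in place of $\ord_x$ produces exactly the carousel tree relative to $L_\infty$ of $\widetilde C$; so $q$ determines this carousel tree. Feeding the carousel tree into the inductive labelling procedure described in Section~\ref{trees} (equivalently, \cite[Lemma~3.1]{Anne}) recovers the Eggers--Wall tree $\Theta_{L_\infty}(\widetilde C)$ with its vertex weights and edge labels. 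Finally, it is classical that the Eggers--Wall tree of a curve relative to a smooth branch determines the embedded topological type of their union (see \cite{Wall}, and also \cite{GarciaBarroso2019}); applying this with the smooth branch $L_\infty$ yields the topological type of $(\widetilde C\cup L_\infty,[1:0:0])$. Composing the three reconstructions proves the lemma.

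\textbf{Main obstacle.} The delicate point is Step~1: one must check that the $d_1$ intersection points $p_j(t)$ really do exhaust the Newton--Puiseux roots relative to $L_\infty$ (so that $[d_1]$ is in canonical bijection with that family), and that the exponent $q(j,k)$ occurring in $d(p_j(t),p_k(t))=\Theta(t^{1-q(j,k)})$ is \emph{exactly} the order of coincidence $\ord_u(\zeta_j-\zeta_k)$ -- not merely comparable to some characteristic or coincidence exponent -- since it is this on-the-nose identification that lets the combinatorics of $q$ feed verbatim into Definition~\ref{carouseultree}. One should also confirm that $\Theta_{L_\infty}(\widetilde C)$, with $L_\infty$ marked as its root, encodes how $L_\infty$ itself sits with respect to $\widetilde C$, so that it controls the topology of the union $\widetilde C\cup L_\infty$ and not merely that of $\widetilde C$.
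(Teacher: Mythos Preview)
Your argument follows the same three-step route as the paper: identify $q$ with the coincidence-order map $\ord_x$ of Definition~\ref{carouseultree}, pass to the carousel tree and then to the Eggers--Wall tree $\Theta_{L_\infty}(\widetilde C)$ via the procedure of Section~\ref{trees}, and read off the topological type. Step~1 is exactly what the paper asserts in one line (``the function $q$ is the same as the function $\ord_x$''), and your expanded justification of it is fine.

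The one place where the paper is more careful is your final sentence in Step~2. You invoke as ``classical'' that the Eggers--Wall tree relative to an \emph{arbitrary} smooth branch determines the embedded topological type of the union, citing \cite{Wall}. But the results you are pointing at (Proposition~4.3.9 and Theorem~5.5.9 of \cite{Wall}) are formulated for a smooth branch \emph{transversal} to the curve, and $L_\infty$ is typically \emph{not} transversal to $\widetilde C$ at a point at infinity. The paper closes this gap by adjoining an auxiliary transversal smooth branch $\mathcal L$ and applying the inversion theorem for Eggers--Wall trees \cite[Theorem~4.5]{GarciaBarroso2019} to pass from $\Theta_{L_\infty}(\widetilde C\cup L_\infty\cup\mathcal L)$ to $\Theta_{\mathcal L}(\widetilde C\cup L_\infty)$, after which Wall's results apply directly. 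You flag precisely this concern in your ``Main obstacle'' paragraph but do not resolve it; inserting the inversion step, as the paper does, is the cleanest fix.
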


\begin{proof} 
The topological type of $(\widetilde{C}\cup L_\infty,[1:0:0])$ is encoded by its Eggers-Wall tree relative to a smooth branch $\mathcal{L}$ transversal to $(\widetilde{C}\cup L_\infty,[1:0:0])$ (see Wall \cite[Proposition 4.3.9 and Theorem 5.5.9]{Wall}).   
To prove the lemma we notice that the function $q$ is the same as the function $\ord_x$ of   Definition \ref{carouseultree}. By the process described in Section \ref{trees}, one obtains the Eggers-Wall tree relative to $L_\infty$ of $(\widetilde{C}\cup L_\infty,[1:0:0])$. By applying the inversion theorem for Eggers-Wall tree \cite[Theorem 4.5]{GarciaBarroso2019} to $\Theta_{L_{\infty}}(\widetilde{C}\cup L_\infty\cup \mathcal{L},[1:0:0])$, one  gets the Eggers-Wall tree $\Theta_{\mathcal{L}}(\widetilde{C}\cup L_\infty,[1:0:0])$.

\end{proof}

As already noted, this discovery of the  topology type involved the  chosen coordinate system and the metric $d$.
We must show we can discover it using  $d'$ and without using  the  chosen coordinate system.

The points $p_1(t),\dots,p_{d_1}(t)$ that we used to find the numbers $q(j,k)$ were obtained by intersecting $\mathcal{C}_1$ with the line $x=t$. 
The arc  $t\in (R_0,\infty)\mapsto p_1(t)$ satisfies
\begin{equation}\label{1}
d(0,p_1(t))=\Theta(t).    
\end{equation}

Moreover, the other points $p_2(t),\dots,p_{d_1}(t)$ are in the disk of radius $\eta t$ centered at $p_1(t)$ in the plane $x=t$. 
Here, $\eta>0$ can be as small as we like, so long as $R_0$ is then chosen sufficiently big.

Instead of a disk of radius $\eta t$, we can use a ball $B(p_1(t),\eta t)$ of radius $\eta t$ centered at $p_1(t)$. 
This ball $B(p_1(t),\eta t)$ intersects $\mathcal{C}_{1}$ in $d_1$ topological disks $D_1(\eta t),\dots,D_{d_1}(\eta t)$, named such that $p_l(t)\in D_l(\eta t),l=1,\ldots,d_1$ and thus  $\dist(D_j(\eta t),D_k(\eta t))\leq d(p_j(t),p_k(t))$.
On the other hand, let $\widetilde{p}_l(t)\in D_l(\eta t), l=1,\ldots,d_1$ such that $$\dist(D_j(\eta t),D_k(\eta t))= d(\widetilde{p}_j(t),\widetilde{p}_k(t)).$$ 

Consider the projection $P\colon \C^2 \to \C$ given by $P(x,y)=x$ and let $\alpha_t $ be the segment in $\C$ joining $P(\widetilde{p}_j(t))$ to $P(\widetilde{p}_k(t))$ and let $\tilde{\alpha}_t$ be the lifting of $\alpha_t$ by the restriction
$P|_{C\backslash B_{R_0}(0)}$  with origin $ \widetilde{p}_k(t).$ 
Applying Lemma \ref{semnome} to  $P$ with
$u=\widetilde{p}_k(t)$ and $u'=\widetilde{p}_j(t)$, we then obtain 
$$d(\widetilde{p}_j(t),\widetilde{p}_k(t))\geq \frac{1}{M}(\len(\tilde{\alpha}_t)+d(\widetilde{p}_j(t),\tilde{\alpha}_t(1)))\geq \frac{1}{M}d(\widetilde{p}_j(t),\tilde{\alpha}_t(1)).$$ 
But $d(\widetilde{p}_j(t),\tilde{\alpha}_t(1))=\Theta(t^{1-q(j,k)})$ since $P(\widetilde{p}_j(t))=P(\tilde{\alpha}_t(1))$ and $|P(\widetilde{p}_j(t))|=\Theta(t)$. 

We now replace the arc $p_1$ by any continuous arc  on $\mathcal{C}_1$ satisfying (\ref{1}) and we still denote this new arc by $p_1$.
If $\eta$ is sufficiently small it is still true that $B_{\mathcal{C}_1}(p_1(t),\eta t):=\mathcal{C}_1\cap B(p_1(t),\eta t)$ consists of $d_1$ disks $D_1(\eta t),\dots,D_{d_1}(\eta t)$ with $\dist\bigl(D_j(\eta t),D_k(\eta t)\bigr)=\Theta(t^{1-q(j,k)})$.
So at this point, we have gotten rid of the dependence on the choice of coordinate system in discovering the topology, but not yet dependence on the metric $d$. 

A $L$-bilipschitz change to the metric may make the components of $B_{\mathcal{C}_1}(p_1(t),\eta t)$ disintegrate into many pieces, so we can no longer simply use distance between all pieces. 
To resolve this, we consider  $B_{\mathcal{C}_1}(p_1(t),\eta t/L)$ and $B_{\mathcal{C}_1}(p_1(t),\eta L t)$. 
Note that 
$$B_{\mathcal{C}_1}(p_1(t),\eta t/L)\subset B'_{\mathcal{C}_1}(p_1(t),\eta t)\\ \subset B_{\mathcal{C}_1}(p_1(t),\eta Lt),
$$
 where $B'$ means we are using the modified metric $d'$.

Denote by $D_j(\eta t/L)$ and $D_j(\eta Lt),j=1,\dots, d_1$ the disk of $B_{\mathcal{C}_1}(p_1(r),\eta t/L)$ and  $B_{\mathcal{C}_1}(p_1(r),\eta Lt)$, respectively, so that  $D_j(\eta t/L)\subset D_j(\eta Lt)$ for $j=1,\ldots,d_1$. Thus  $B'_{\mathcal{C}_1}(p_1(t),\eta t)$ has  $d_1$ components such that each one contains at most one component of $B_{\mathcal{C}_1}(p_1(r),\eta t/L)$.
Therefore, exactly $d_1$ components of $B'_{\mathcal{C}_1}(p_1(t),\eta t)$  intersect $B_{\mathcal{C}_1}(p_1(t), \eta t/L)$. 
Naming these components $D'_1(\eta t),\dots,D'_{d_1}(\eta t)$, such that $D_j(\eta t/L)\subset D'_j(\eta t) \subset D_j(\eta Lt),j=1,\ldots,d_1$, we still have $\dist(D'_j(\eta t),D'_k(\eta t))=\Theta(t^{1-q(j,k)})$ since 
$$
 \dist(D_j(\eta L t),D_k(\eta Lt))\leq \dist(D'_j(\eta t),D'_k(\eta t))\\ \leq \dist(D_j(\eta t/L),D_k(\eta t/L)).
$$
So the $q(j,k)$ are determined by the distance between  $D_j'(\eta t), j=1,\ldots,d_1$.

Up to now, we have used the metric $d$ to select the components $D_j'(\eta t), j=1,\ldots,d_1$ of $B'_{\mathcal{C}_1}(p_1(t),\eta t)$. 
To avoid using the metric $d$,  consider  $B'_{\mathcal{C}_1}(p_1(t), \eta t/L^2)$. We have  
 $$
  B_{\mathcal{C}_1}(p_1(t), \eta t/L^3) \subset B_{\mathcal{C}_1}'(p_1(t), \eta t/L^2) \subset B_{\mathcal{C}_1}(p_1(t), \eta t/L)\subset D_1'(\eta t)\cup\dots\cup D_{d_1}'(\eta t).
$$ 
This implies that $B'_{\mathcal{C}_1}(p_1(t), \eta t/L^2)$ intersects only the components $D'_j(\eta t), j=1,\dots,d_1 $ of $B'_{\mathcal{C}_1}(p_1(t),\eta t)$.
So we can use only the metric $d'$ to select these components and we are done.

\end{proof}

\section{Topological type determines Lipschitz geometry at infinity}\label{topsimplies}
In this section, we prove that (\ref{it2}) implies (\ref{it1})  of Theorem \ref{atinfinity}. For this, we will construct a bilipschitz map between complex algebraic plane curves with the same data in (\ref{it2}).

\begin{proof}[Proof of the  implication $(\ref{it2}) \Rightarrow  (\ref{it1})$ of Theorem \ref{atinfinity}]
Let $C_1$ and $C_2$ be complex algebraic plane curves with the same data described by (\ref{it2}) of of Theorem \ref{atinfinity}.
Choose $(x,y)$ coordinates in such a  way that none of the curves have  the point $[0:1:0]$ as a point at infinity. 

Let $[1:a^l_1:0],\ldots,[1:a^l_{m_l}:0]$ be the points at infinity of $C_l, l=1,2$, denoted in such a way that $(\widetilde{C}_1\cup L_\infty,[1:a^1_i:0])$ has the same topological type as $(\widetilde{C}_2\cup L_\infty,[1:a^2_i:0])$. Then, by \cite[Theorem 5.5.9]{Wall} and \cite[Proposition 4.3.9]{Wall}, for any smooth branch $L_1$ (resp. $L_2$) through $[1:a_i^1:0]$ (resp. $[1:a_i^2:0]$) transversal to $(C_1\cup L_{\infty},[1:a_i^1:0])$ (resp. $(C_1\cup L_{\infty},[1:a_i^2:0])$) the Eggers-Wall trees $\Theta_{L_1}(\widetilde{C}_1\cup L_{\infty},[1:a_i^1:0])$ and $\Theta_{L_2}(\widetilde{C}_2\cup L_{\infty},[1:a_i^2:0])$ are isomorphic. 
Then, we apply the inversion theorem for Eggers-Wall tree \cite[Theorem 4.5]{GarciaBarroso2019} to both and we get that $\Theta_{L_\infty}(\widetilde{C}_1,[1:a_i^1:0])$ and $\Theta_{L_\infty}(\widetilde{C}_2 ,[1:a_i^2:0])$ are isomorphic.

For each $i$, let  $B^{l}_{i1},\ldots,B^{l}_{ik_i} $ be the branches of $(\widetilde{C}_l,[1:a^l_i:0]), l=1,2$. Again, we denoted in such a way that $(B_{ij}^1,[1:a^1_i:0])$ has the same topological type as $(B_{ij}^2,[1:a^2_i:0])$. From what has been said above, we have that $B_{ij}^1$ and $B_{ij}^2$ have the same characteristic exponents relative to $L_\infty$ and $k_{L_\infty}(B_{ij}^1,B_{ij'}^1)=k_{L_\infty}(B_{ij}^2,B_{ij'}^2).$

The open set $U=\{[x:y:z]\in \P^2:x\neq0\}$ contains all the points at infinity of $C_l,l=1,2$. 
We can use   the coordinate chart $\varphi:U\to\C^2$  defined by $\varphi([x:y:z])=(z/x,y/x)$ to obtain a Newton-Puiseux parametrization of the branches $\varphi(B^l_{ij})$.
Let $D_{\epsilon_0}$ be the open disk of radius $\epsilon_0>0$ centered at the origin with $\epsilon_0$ sufficiently small such that there  exist Newton-Puiseux parametrization $\gamma^l_{ij}:D_{\epsilon_0}\to\C^2$ of $\varphi(B^l_{ij})$ given by
$$\gamma^l_{ij}(w)=\left(w^{d_{ij}},a_i^l+\sum_{k>0}a^l_{ijk}w^k\right).$$  
Let $\Gamma^{l}_{ij}:D_{\epsilon_0}\backslash\{0\}\to\C^2$ given by  
 $$\Gamma_{ij}^l(w)=(\iota^{-1}\circ\varphi^{-1}\circ\gamma^l_{ij})(w)=\left(\frac{1}{w^{d_{ij}}},\frac{a_i^l+\sum_{k>0}a^l_{ijk}w^k}{w^{d_{ij}}}\right),l=1,2.$$

 Consider the compact set $K^l_\epsilon=C_{l}\backslash \bigcup_{ij} \Gamma^l_{ij}(D_\epsilon\backslash\{0\}),l=1,2$.
 We will prove that there exists $\epsilon>0$ that the map 
 
 $$\begin{array}{rcl}
\Phi:C_1\backslash K^1_\epsilon&\longrightarrow&C_2\backslash K^2_\epsilon\\
\Gamma^1_{ij}(w)&\longmapsto&\Gamma^2_{ij}(w)
\end{array}$$
is bilipschitz.
\begin{claim}
Consider the projection $P\colon \C^2 \to \C$ given by $P(x,y)=x$.
In order to check that $\Phi$ is a Lipschitz map it is enough to consider  points in $C_1\backslash K^1_\epsilon$ with the same $x$ coordinate. That is, there exists a constant $c>0$ such that   
 $$d\bigl(\Gamma^2_{ij}(w'),\Gamma^2_{i'j'}(w'')\bigr)\leq c d\bigl(\Gamma^1_{ij}(w'),\Gamma^1_{i'j'}(w'')\bigr),$$
  for all $w',w''$ such that $P(\Gamma_{ij}^1(w'))=P(\Gamma_{i'j'}^1(w''))$.
\end{claim}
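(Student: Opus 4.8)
The plan is to reduce the general Lipschitz inequality for $\Phi$ to the special case of pairs of points with equal first coordinate, using Lemma \ref{semnome} and Remark \ref{bilip}. First I would fix $\epsilon>0$ small enough that Lemma \ref{semnome} applies to $C_1$ (and to $C_2$) with the linear projection $P(x,y)=x$: this is legitimate because the choice of coordinates forces the kernel $\{x=0\}$ of $P$ to be the tangent line at infinity associated with $[0:1:0]$, which is not a point at infinity of either curve, so no tangent line at infinity is contained in $\ker P$; moreover the compact set delivered by that lemma is of the form $C_l\backslash\bigcup_{ij}\Gamma^l_{ij}(D_\epsilon\backslash\{0\})$, i.e. exactly $K^l_\epsilon$, and the resulting constant may be taken to be a single $M>1$ for $C_1$. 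I would then assume the displayed inequality of the claim holds with some constant $c>0$.

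The key observation is that $P\circ\Phi=P$ on $C_1\backslash K^1_\epsilon$, since $P(\Gamma^l_{ij}(w))=1/w^{d_{ij}}$ does not depend on $l$. Hence for any arc $\beta$ in $C_1\backslash K^1_\epsilon$ the composite $\Phi\circ\beta$ is an arc in $C_2\backslash K^2_\epsilon$ with $P\circ(\Phi\circ\beta)=P\circ\beta$; since $\beta$ is a lift of $P\circ\beta$ under the covering map $P|_{C_1\backslash K^1_\epsilon}$ and $\Phi\circ\beta$ is a lift of $P\circ\beta$ under $P|_{C_2\backslash K^2_\epsilon}$, Remark \ref{bilip} applied to each curve gives that $\len(\beta)$ and $\len(\Phi\circ\beta)$ are each comparable to $\len(P\circ\beta)$ up to multiplicative constants independent of $\beta$. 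In particular there is $c_1>0$ with $\len(\Phi\circ\beta)\le c_1\len(\beta)$ for every arc $\beta$ in $C_1\backslash K^1_\epsilon$ (the case of a constant arc being trivial).

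Now take arbitrary $u,u'\in C_1\backslash K^1_\epsilon$. By Lemma \ref{semnome} there is an arc $\tilde\alpha$ in $C_1\backslash K^1_\epsilon$ from $u$ to a point $u''$ with $P(u'')=P(u')$ and $d(u,u')\le\len(\tilde\alpha)+d(u'',u')\le M\,d(u,u')$. The arc $\Phi\circ\tilde\alpha$ joins $\Phi(u)$ to $\Phi(u'')$, and $P(\Phi(u''))=P(u'')=P(u')=P(\Phi(u'))$, so $\Phi(u'')$ and $\Phi(u')$ have the same first coordinate; hence the hypothesis of the claim yields $d(\Phi(u''),\Phi(u'))\le c\,d(u'',u')$. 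Combining with the triangle inequality for the ambient Euclidean metric and the length bound of the previous paragraph,
$$d(\Phi(u),\Phi(u'))\le\len(\Phi\circ\tilde\alpha)+d(\Phi(u''),\Phi(u'))\le c_1\len(\tilde\alpha)+c\,d(u'',u')\le\max\{c_1,c\}\,M\,d(u,u'),$$
so $\Phi$ is Lipschitz with constant $\max\{c_1,c\}\,M$. Running the same argument for $\Phi^{-1}$, which is $\Gamma^2_{ij}(w)\mapsto\Gamma^1_{ij}(w)$, would then reduce bilipschitzness of $\Phi$ to the same-first-coordinate estimate in both directions.

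The step I expect to require the most care is the bookkeeping around $\epsilon$: one must check that the compact set furnished by Lemma \ref{semnome} for $C_1$ can indeed be taken to be $K^1_\epsilon$, so that the arc $\tilde\alpha$ genuinely lies in the domain of $\Phi$, and that the constant $c_1$ extracted from Remark \ref{bilip} is uniform over all arcs. Everything else is a routine concatenation-of-arcs estimate.
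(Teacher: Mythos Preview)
Your proposal is correct and follows essentially the same route as the paper: both arguments use Lemma~\ref{semnome} to replace an arbitrary pair $u,u'$ by an arc $\tilde\alpha$ plus a same-$x$-coordinate pair $(u'',u')$, and then use Remark~\ref{bilip} on each curve to compare $\len(\Phi\circ\tilde\alpha)$ with $\len(\tilde\alpha)$ via the common projection $P\circ\tilde\alpha$. Your observation $P\circ\Phi=P$ is exactly what the paper obtains by lifting $\alpha$ separately to $\tilde\alpha_1\subset C_1$ and $\tilde\alpha_2\subset C_2$ and invoking uniqueness of lifts to identify $\tilde\alpha_2=\Phi\circ\tilde\alpha_1$; your bookkeeping worry about $K^1_\epsilon$ is also handled, since the proof of Lemma~\ref{semnome} constructs its compact set precisely as $C\backslash\bigcup_{ij}\Gamma_{ij}(D_\epsilon\backslash\{0\})$.
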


Indeed, let $\Gamma_{ij}^1(w)$ and $\Gamma_{i'j'}^1(w')$ be any two elements of $C_1\backslash K^1_\epsilon$ and  suppose that $1/\epsilon^{d_{ij}}\leq 1/\epsilon^{d_{i'j'}}$.
Let $\alpha$ be a curve in $\C\backslash D_{1/\epsilon^{d_{ij}}}$ joining $P(\Gamma_{ij}^1(w))$ to $P(\Gamma_{i'j'}^1(w'))$ as in  Lemma \ref{semnome}. Let $\tilde{\alpha}_1$ (resp.\ $\tilde{\alpha}_2$) be the lifting of $\alpha$ by the restriction 
$P|_{\Gamma_{ij}^1(D_{\epsilon}\backslash \{0\})}$ (resp.\ $P|_{\Gamma_{ij}^2(D_{\epsilon}\backslash \{0\})}$) with origin $ \Gamma_{ij}^1(w)$ (resp.\ 
$\Gamma_{ij}^2(w)$). 
Consider the unique $w'' \in D_{\epsilon}$ such that $\Gamma^1_{ij}(w'')$ is the end of $\tilde{\alpha}_1$.  Notice that   $P\circ\Gamma^1_{ij}=P\circ\Gamma^2_{ij}$ and  by uniqueness of lifts $\tilde{\alpha}_2=\Gamma^2_{ij}\circ(\Gamma^1_{ij})^{-1}\circ \tilde{\alpha}_1 $ which implies that $\Gamma^2_{ij}(w'')$ is the end of $\tilde{\alpha}_2$.

We have
$$
d\bigl(\Gamma^2_{ij}(w),\Gamma^2_{i'j'}(w')\bigr) \leq \len(\tilde \alpha_2) +
d\bigl(\Gamma^2_{ij}(w''),\Gamma^2_{ij}(w')\bigr).$$

According to the Remark \ref{bilip}, there are constant, say $c_1$ and $c_2$ such that 
 $ \len(\tilde \alpha_2) \leq c_1\len(\alpha)\leq c_1c_2 \len(\tilde \alpha_1)$. By hypothesis, there exists a constant $c>0$ such that   
 $$d\bigl(\Gamma^2_{ij}(w''),\Gamma^2_{ij}(w')\bigr)\leq c d\bigl(\Gamma^1_{ij}(w''),\Gamma^1_{ij}(w')\bigr).$$
Therefore setting $C=\max\{c_1c_2,c\}$, we obtain
$$d\bigl(\Gamma^2_{ij}(w),\Gamma^2_{i'j'}(w')\bigr) \leq C\Bigl( \len(\tilde \alpha_1) +  
d\bigl(\Gamma^1_{ij}(w''),\Gamma^1_{ij}(w')\bigr)\Bigr).$$
Applying Lemma \ref{semnome} to  $C_1$ with
$u=\Gamma^1_{ij}(w)$ and $u'=\Gamma^1_{i'j'}(w')$, we  then have
$$
d\bigl(\Gamma^2_{ij}(w),\Gamma^2_{i'j'}(w')\bigr)  \leq C M
d\bigl(\Gamma^1_{ij}(w),\Gamma^1_{i'j'}(w')\bigr) .$$
This proves $\Phi$ is Lipschitz 
and the claim.

Now, let $B^1_{ij}$ and $B^2_{i'j'}$ be branches of $\widetilde{C}_1$ and $\widetilde{C}_2$, respectively,  
 with $i\neq i'$. Let  $s\in(0,1]\to \Gamma^1_{ij}(ws^{1/d_{ij}})$ and $s\in(0,1]\to \Gamma^1_{i'j'}(w's^{1/d_{i'j'}})$  be the two real arcs with $w^{d_{ij}}=(w')^{d_{i'j'}}$. Then we have
\begin{multline*}
    d\bigl(\Gamma^1_{ij}(ws^{1/d_{ij}}), \Gamma^1_{i'j'}(w's^{1/d_{i'j'}})\bigr) = \frac{1}{s|w^{d_{ij}}|}   
\bigg| a^1_{ij}-a^1_{i'j'}
+ \sum_{k>0}a^1_{ijk}w^k{s^{k/d_{ij}}}  \\ -\sum_{k>0}a^1_{i'j'k}(w')^k{s^{k/d_{i'j'}}} \bigg|.
\end{multline*}
and
\begin{multline*}
    d\bigl(\Phi(\Gamma^1_{ij}(ws^{1/d_{ij}})), \Phi(\Gamma^1_{i'j'}(w's^{1/d_{i'j'}})\bigr) = \frac{1}{s|w^{d_{ij}}|}   
\bigg| a^2_{ij}-a^2_{i'j'}
+ \sum_{k>0}a^2_{ijk}w^k{s^{k/d_{ij}}}  \\ -\sum_{k>0}a^2_{i'j'k}(w')^k{s^{k/d_{i'j'}}} \bigg|
\end{multline*}
Hence the ratio
\begin{equation}\label{quotient1}
 d\bigl(\Gamma^1_{ij}(ws^{1/d_{ij}}), \Gamma^1_{i'j'}(w's^{1/d_{i'j'}})\bigr)\, \Big/ \, 
d\bigl(\Phi(\Gamma^1_{ij}(ws^{1/d_{ij}})), \Phi(\Gamma^1_{i'j'}(w's^{1/d_{i'j'}})\bigr)\Bigr)    
\end{equation}
 tends to the non-zero constant $ \frac{|a^1_{ij}-a^1_{i'j'}|}{|a^2_{ij}-a^2_{i'j'}|}$ as $s$ tends to $0$ for every such pairs $(w,w')$. So there
exists $\epsilon >0$ such that for each such $(w,w')$ with $|w| = 1$ and
each $s < \epsilon$, the quotient (\ref{quotient1}) belongs to $[1/c,c]$ where
$c>0$.

Now, consider the branches   $B^1_{ij}$ and $B^2_{ij}$. 
Let  $s\in(0,1]\to \Gamma^1_{ij}(ws)$ and $s\in(0,1]\to \Gamma^1_{i'j'}(w's)$ be the two real arcs with $w^{d_{ij}}=(w')^{d_{ij}}$.
Then we have
$$    d\bigl(\Gamma^1_{ij}(ws), \Gamma^1_{ij}(w's)\bigr) = \frac{1}{s^{d_{ij}}|w^{d_{ij}}|}   
\bigg| 
\sum_{k>0}a^1_{ijk}(w^k-(w')^k){s^{k}}   \bigg|
$$
and
$$
    d\bigl(\Phi(\Gamma^1_{ij}(ws)), \Phi(\Gamma^1_{ij}(w's)\bigr)  = \frac{1}{s^{d_{ij}}|w^{d_{ij}}|}   
\bigg| 
\sum_{k>0}a^2_{ijk}(w^k-(w')^k){s^{k}}   \bigg|
$$

Let $k_0$ be the minimal element of $\{k ;a^1_{ijk}\neq 0 \text{ and } w^k \neq(w')^k \}$. 
Then $k_0/d_{ij}$ is a characteristic exponent for $B^1_{ij}$ relative to $L_\infty$, so $a^1_{ijk_0}$ and $a^2_{ijk_0}$ are non-zero.
Hence the ratio
\begin{equation}\label{quotient2}
 d\bigl(\Gamma^1_{ij}(ws), \Gamma^1_{ij}(w's)\bigr)\, \Big/ \, 
d\bigl(\Phi(\Gamma^1_{ij}(ws)), \Phi(\Gamma^1_{ij}(w's)\bigr)\Bigr)  
\end{equation}
tends to the non-zero constant $c_{ijk_0}= \frac{|a^1_{ijk_0}|}{|a^2_{ijk_0}|}$ as $s$ tends to $0$.

Notice that the integer $k_0$ depends on the pair of points $(w,w')$.
But $k_0/d_{ij}$ is a characteristic exponent relative to $L_\infty$ of $B^1_{ij}$.
Therefore there is a finite number of values for $k_0$ and $c_{ijk_0}$.  
Moreover, the set of pairs $(w,w')$ such that $w\neq w'$ and $w^{d_{ij}}=(w')^{d_{ij}}$  consists of a disjoint union of $d_{ij}-1$ lines, say $L_l=\{(w,\exp(2\pi l/d_{ij})w),w\in \C^*\}, l=1,\ldots, d_{ij}-1$.
Observe that for any $(w,w')\in L_l$ the quotient (\ref{quotient2}) tends to positive constant as $s\to0$ which does not depend  on the pair $(w,w')$.
So there exists $\epsilon_1 >0$ such that for each such $(w,w')$ with $|w| = 1$ and each $s \leq \epsilon_1$, the quotient (\ref{quotient2}) belongs to $[1/c,c]$ where
$c>0$, as claimed.

For the case of branches $B_{ij}^1$ and $B_{ij'}^2$ with $j\neq j'$, the same arguments work taking into account their coincidence exponent relative to $L_\infty$.

\end{proof}

\section{Lipschitz geometry of complex algebraic plane curves }

In this section, we present the complete  classification of the Lipschitz geometry of complex algebraic plane curves. We define the Lipschitz graph of a complex algebraic plane curves which is a combinatorial object that encode its Lipschitz geometry.

Let $C$ be a complex algebraic plane curve.
A sequence of \textbf{good minimal resolution} of $\widetilde{C}$ produces a smooth curve  $\widetilde{\mathcal{C}}$. By \cite[Lemma 9.2.3]{brieskorn}, the connected components of  $\widetilde{\mathcal{C}}$ correspond bijectively to the irreducible components of $C$. 

\begin{definition}\label{Lipgraph}
Let $C$ be a complex algebraic plane curve with irreducible components $C_1,\ldots, C_n$.
A sequence of \textbf{good minimal resolution} of $\widetilde{C}\cup L_\infty$    produces a sequence of exceptional curves $E_1,\ldots,E_{r}$ and strict transform curves $\mathcal{C}_1,\ldots, \mathcal{C}_{n}$ of the curves  $C_1,\ldots, C_{n}$ and the strict transform $\mathcal{L}_\infty$ of the line at infinity $L_\infty$.

The \textbf{Lipschitz graph of $C$}  is a rooted graph with vertices $V_k$ corresponding to the curves $E_k$ labeled with its  self-intersection number,  vertices $W_i$ corresponding to the  curves $\mathcal{C}_i$ not labeled and a root corresponding to the $\mathcal{L}_\infty$. 
We put one edge joining vertices for each intersection point of  the corresponding curves.    
\end{definition}

\begin{remark}\label{disunion}
Let $p_1,\ldots,p_m$ be the singular points of $\widetilde{C}\cup L_\infty$. 
We point out that the Lipschitz graph  of $C$ is obtained as the quotient of the disjoint union of the individual dual resolution graph of minimal good resolutions of $(\widetilde{C}\cup L_\infty,p_i)$, by identifying
all vertices corresponding to the branch of an irreducible component $C_j$ for all $j=1,\ldots,n$.
Then it is clear that the Lipschitz graph of $C$ is determined by the topological type of the germs  $(\widetilde{C}\cup L_\infty,p_i)$ and $(\widetilde{C}_j\cup L_\infty,p_i)$.
\end{remark}

\begin{example}\label{quotient}
Let $C$ be a complex algebraic plane curve defined by $x[y^2-x^2(1+x)]=0$. We have two singular points for $\widetilde{C}\cup L_\infty$, namely $[0:0:1]$ and $[0:1:0]$. 
Dual resolution graph of a good minimal resolution at the singular point $[0:0:1]$ is drawn in Figure \ref{fig:dualresolution}.

\begin{figure}[h]
    \centering    
\begin{tikzpicture}[line cap=round,line join=round,>=triangle 45,x=1cm,y=1cm]
\clip(-2,-1.8) rectangle (8,1);
\draw [color=blue, line width=1pt ] plot [smooth, tension=2] coordinates { (1,1) (0,0)  (-1,0) (0,0) (1,-1)};
\draw [line width=1pt, color=green] (0,-1)-- (0,1);

\begin{scope}[xshift=3cm]
\draw [line width=1pt,color=black] (-1,0) -- (2,0);
\draw [line width=1pt,color=green] (0,-1) -- (0,1);
\draw [color=blue, line width=1pt ] plot [smooth, tension=1] coordinates { (0.5,1) (1,-1)  (1.5,1)};
\end{scope}

\begin{scope}[xshift=7cm,yshift=-1cm]
\draw [->,line width=1pt,color=green] (0,0) -- (-1,2);
\draw [->,line width=1pt,color=blue] (0,0) -- (0,2);
\draw [->,line width=1pt,color=blue] (0,0) -- (1,2);
\begin{scriptsize}
\draw [fill=black] (0,0) circle (2pt);
\draw[color=black] (0,-0.5) node {\normalsize$-1$};
\end{scriptsize}
\end{scope}

\end{tikzpicture}

\caption{The blue arrow vertices correspond to the branches of $C_1: y^2-x^2(1+x)=0$ and the green correspond to the branch of $C_2: x=0$.}
    \label{fig:dualresolution}
\end{figure}
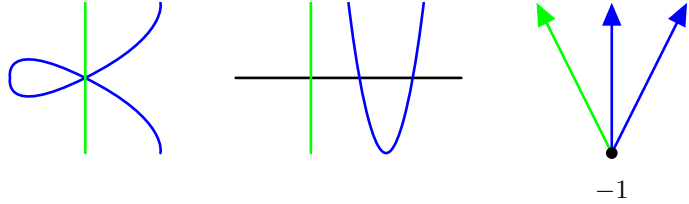

Dual resolution graph of a good minimal resolution for the singular point $[0:1:0]$ is drawn in Figure \ref{fig:dual}.

\begin{figure}[h]
    \centering
\begin{tikzpicture}[line cap=round,line join=round,>=triangle 45,x=0.8cm,y=0.8cm]
\clip(-1,-1) rectangle (5,2);
\draw [line width=1pt] (0,0)-- (2,0);
\draw [line width=1pt] (2,0)-- (4,0);
\draw [->,line width=1pt,color=green] (0,0) -- (0,2);
\draw [->,line width=1pt,color=blue] (4,0) -- (3,2);
\draw [->,line width=1pt,color=red] (4,0) -- (5,2);

\begin{scriptsize}
\draw [fill=black] (0,0) circle (2pt);
\draw[color=black] (0,-0.4) node {\normalsize$-2$};
\draw [fill=black] (2,0) circle (2pt);
\draw[color=black] (2,-0.4) node {\normalsize$-2$};
\draw [fill=black] (4,0) circle (2pt);
\draw[color=black] (4,-0.4) node {\normalsize$-1$};
\end{scriptsize}
\end{tikzpicture}
\caption{The blue arrow vertex correspond to the branch of $C_1: y^2-x^2(1+x)=0$ and the green correspond to the branch of $C_2: x=0$. 
The red arrow vertex corresponds to the branch of the line at infinity.}
    \label{fig:dual}
\end{figure}
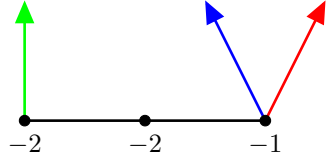
Connecting these graphs in the way described above we obtain the Lipschitz graph for $C$ (see Fig. \ref{fig:lipgraph}). 
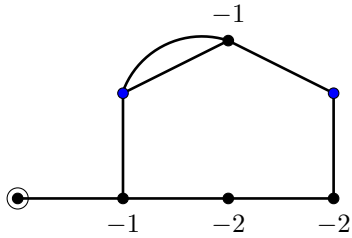
\begin{figure}[h]
    \centering
\begin{tikzpicture}[line cap=round,line join=round,>=triangle 45,x=0.7cm,y=0.7cm]

\draw [line width=1pt] (0,0)-- (2,0);
\draw [line width=1pt] (2,0)-- (4,0);
\draw [line width=1pt] (4,0)-- (6,0);
\draw [line width=1pt] (6,0)-- (6,2);
\draw [line width=1pt] (2,2)-- (2,0);
\draw [line width=1pt] (4,3)-- (2,2);
\draw [line width=1pt] (4,3)-- (6,2);
\draw [shift={(3.5,1.48)},line width=1pt]  plot[domain=1.2529983209850046:2.807890477052243,variable=\t]({1*1.6001249951175691*cos(\t r)+0*1.6001249951175691*sin(\t r)},{0*1.6001249951175691*cos(\t r)+1*1.6001249951175691*sin(\t r)});
\begin{scriptsize}

\draw [fill=black] (0,0) circle (2pt);
\draw [color=black] (0,0) circle (4pt);

\draw [fill=black] (2,0) circle (2pt);
\draw[color=black,font=\fontsize{15}{15}] (2,-0.5) node {\normalsize $-1$};
\draw [fill=black] (4,0) circle (2pt);
\draw[color=black,font=\fontsize{40}{40}] (4,-0.5) node {\normalsize $-2$};
\draw [fill=black] (6,0) circle (2pt);
\draw[color=black] (6,-0.5) node {\normalsize$-2$};
\draw [fill=blue] (6,2) circle (2pt);
\draw [fill=blue] (2,2) circle (2pt);
\draw [fill=black] (4,3) circle (2pt);
\draw[color=black] (4,3.5) node {\normalsize$-1$};
\end{scriptsize}
\end{tikzpicture}
    \caption{Lipschitz graph of Example \ref{quotient}. The vertices that correspond to irreducible components of the curve are distinguished from the other vertices by the fact that they are not labeled. 
But to improve the visualization of the graph we put a distinct color to such vertices.   
 }
    \label{fig:lipgraph}
\end{figure}
\end{example}

We can do the inverse process: start from a Lipschitz graph  of a complex algebraic plane curve $C$ and obtain the individuals  dual resolution graph of minimal good resolutions of $(\widetilde{C}\cup L_\infty,p_i)$. Then by \cite[Theorem 8.1.7]{Wall} we extract the following data: the topological type of the germ of the curve $\widetilde{C}\cup L_\infty$  at each of its singular points. 

\begin{example}
Suppose that Figure \ref{fig:given} is a Lipschitz graph of a complex algebraic plane curve $C$.
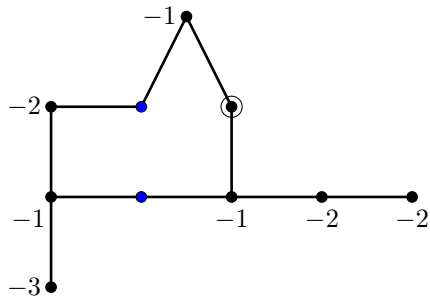
\begin{figure}[h]
    \centering
\begin{tikzpicture}[line cap=round,line join=round,>=triangle 45,x=0.6cm,y=0.6cm]
\draw [line width=1pt] (-3,0)-- (-1,0);
\draw [line width=1pt] (-1,0)-- (1,0);
\draw [line width=1pt] (1,0)-- (1,2);
\draw [line width=1pt] (1,0)-- (3,0);
\draw [line width=1pt] (3,0)-- (5,0);
\draw [line width=1pt] (-3,0)-- (-3,-2);
\draw [line width=1pt] (-3,0)-- (-3,2);
\draw [line width=1pt] (-3,2)-- (-1,2);
\draw [line width=1pt] (-1,2)-- (0,4);
\draw [line width=1pt] (0,4)-- (1,2);

\begin{scriptsize}
\draw [fill=black] (-3,0) circle (2pt);
\draw[color=black] (-3.5,-0.5) node {\normalsize	$-1$};

\draw [fill=blue] (-1,0) circle (2pt);

\draw [fill=black] (1,0) circle (2pt);
\draw[color=black] (1,-0.5) node {\normalsize	$-1$};

\draw [fill=black] (1,2) circle (2pt);
\draw [color=black] (1,2) circle (4pt);

\draw [fill=black] (3,0) circle (2pt);
\draw[color=black] (3,-0.5) node {\normalsize $-2$};

\draw [fill=black] (5,0) circle (2pt);
\draw[color=black] (5,-0.5) node {\normalsize$-2$};

\draw [fill=black] (-3,-2) circle (2pt);
\draw[color=black] (-3.6,-2) node {\normalsize$-3$};
\draw [fill=black] (-3,2) circle (2pt);
\draw[color=black] (-3.6,2) node {\normalsize$-2$};
\draw [fill=blue] (-1,2) circle (2pt);
\draw [fill=black] (0,4) circle (2pt);
\draw[color=black] (-0.6,4) node {\normalsize$-1$};
\end{scriptsize}
\end{tikzpicture}
\caption{A given Lipschitz graph of a complex algebraic plane curve $C$ is given.}
    \label{fig:given}
\end{figure}
 If we erase the vertices corresponding to the irreducible components of $\widetilde{C}\cup L_\infty$, we get three graphs with some no end edges.
We put an arrow vertex in each no end edges. 
But before doing all that we distinguish by colors the vertices corresponding to  irreducible components and the edges connected to them to discern which branches belongs to an irreducible component (see Fig. \ref{fig:colorvertex}). 
\begin{figure}[h]
    \centering
\begin{tikzpicture}[line cap=round,line join=round,>=triangle 45,x=0.6cm,y=0.6cm]
\draw [line width=1pt,color=blue] (-3,0)-- (-1,0);
\draw [line width=1pt,color=blue] (-1,0)-- (1,0);
\draw [line width=1pt,color=red] (1,0)-- (1,2);
\draw [line width=1pt] (1,0)-- (3,0);
\draw [line width=1pt] (3,0)-- (5,0);
\draw [line width=1pt] (-3,0)-- (-3,-2);
\draw [line width=1pt] (-3,0)-- (-3,2);
\draw [line width=1pt,color=green] (-3,2)-- (-1,2);
\draw [line width=1pt, color=green] (-1,2)-- (0,4);
\draw [line width=1pt,color=red] (0,4)-- (1,2);

\begin{scriptsize}
\draw [fill=black] (-3,0) circle (2pt);
\draw[color=black] (-3.5,-0.5) node {\normalsize	$-1$};

\draw [fill=blue] (-1,0) circle (2pt);

\draw [fill=black] (1,0) circle (2pt);
\draw[color=black] (1,-0.5) node {\normalsize	$-1$};

\draw [fill=green] (-1,2) circle (2pt);
\draw [fill=black] (3,0) circle (2pt);
\draw[color=black] (3,-0.5) node {\normalsize $-2$};

\draw [fill=black] (5,0) circle (2pt);
\draw[color=black] (5,-0.5) node {\normalsize$-2$};

\draw [fill=black] (-3,-2) circle (2pt);
\draw[color=black] (-3.6,-2) node {\normalsize$-3$};

\draw [fill=black] (-3,2) circle (2pt);
\draw[color=black] (-3.6,2) node {\normalsize$-2$};

\draw [fill=red] (1,2) circle (2pt);
\draw [color=red] (1,2) circle (4pt);

\draw [fill=black] (0,4) circle (2pt);
\draw[color=black] (-0.6,4) node {\normalsize$-1$};
\end{scriptsize}
\end{tikzpicture}
\caption{There are three irreducible components, say, $R$ (red) corresponding to the line at infinity, $G$ (green) and $B$ (blue) corresponding to the irreducible components of $C$.}
    \label{fig:colorvertex}
\end{figure}
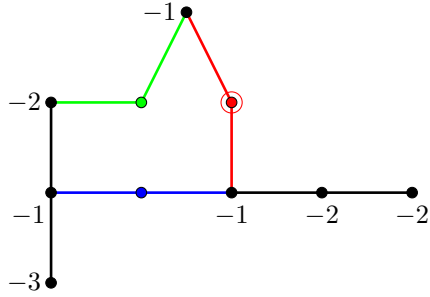

Now, we delete the vertex corresponding to irreducible components and put the arrows vertices in the no end edges  with the same color as the edge (see Fig.\ref{fig:cut}).

\begin{figure}[h]
    \centering
\begin{tikzpicture}[line cap=round,line join=round,>=triangle 45,x=0.6cm,y=0.6cm]

\draw [->,line width=1pt,color=blue] (-3,0)-- (-1.2,0);
\draw [line width=1pt] (-3,0)-- (-3,-2);
\draw [line width=1pt] (-3,0)-- (-3,2);
\draw [->,line width=1pt, color=green] (-3,2)-- (-1.2,2);

\draw [<-,line width=1pt, color=blue] (-0.8,0)-- (1,0);
\draw [->,line width=1pt, color=red] (1,0)-- (1,1.8);
\draw [line width=1pt ] (1,0)-- (3,0);
\draw [line width=1pt] (3,0)-- (5,0);

\draw [<-,line width=1pt,color=green] (-0.9,2.2)-- (0,4);
\draw [->,line width=1pt,color=red] (0,4)-- (0.9,2.2);

\begin{scriptsize}
\draw [fill=black] (-3,0) circle (2pt);
\draw[color=black] (-3.5,-0.5) node {\normalsize	$-1$};


\draw [fill=black] (1,0) circle (2pt);
\draw[color=black] (1,-0.5) node {\normalsize	$-1$};

\draw [fill=black] (3,0) circle (2pt);
\draw[color=black] (3,-0.5) node {\normalsize $-2$};
\draw[color=black] (3,-1.3) node {\normalsize$\mathcal{G}_2$};

\draw [fill=black] (5,0) circle (2pt);
\draw[color=black] (5,-0.5) node {\normalsize$-2$};

\draw [fill=black] (-3,-2) circle (2pt);
\draw[color=black] (-3.6,-2) node {\normalsize$-3$};
\draw[color=black] (-3,-2.8) node {\normalsize$\mathcal{G}_1$};

\draw [fill=black] (-3,2) circle (2pt);
\draw[color=black] (-3.6,2) node {\normalsize$-2$};


\draw [fill=black] (0,4) circle (2pt);
\draw[color=black] (-0.6,4) node {\normalsize$-1$};
\draw[color=black] (0.7,4) node {\normalsize$\mathcal{G}_3$};
\end{scriptsize}
\end{tikzpicture}
\caption{There are three singular points of $\widetilde{C}\cup L_\infty$, say $p_1,p_2$ and $p_3$ with dual resolution graphs $\mathcal{G}_1, \mathcal{G}_2$ and $\mathcal{G}_3$, respectively.}
    \label{fig:cut}
\end{figure}
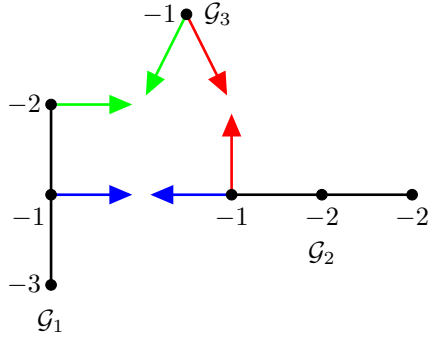

 The colors tell us the relation between branches and irreducible components. 
 This and the dual resolution graphs  $\mathcal{G}_1, \mathcal{G}_2$ and $\mathcal{G}_3$ (see Fig. \ref{fig:cut})  are sufficient to determine the topological type of the germ of the irreducible components at each of its singular points.
 To see this we subject the graphs $\mathcal{G}_1, \mathcal{G}_2$ and $\mathcal{G}_3$ repeatedly to a contraction operation which corresponds to blowing down of a curve.
 We call a vertex in a dual resolution graph (not associate to a minimal resolution) \textbf{contractible} when it has label $-1$ and valency less than three.
 Contraction of one of these vertices consists:
 \begin{itemize}
     \item if this vertex has valency 2, in adding one to the intersection numbers of its labeled adjacent vertices, removing the vertex, and  amalgamating its two adjacent edges into one edge.
     \item if this vertex has valency 1, in adding one to the intersection numbers of its labeled adjacent vertex and removing the vertex and its adjacent edge. 
 \end{itemize}
 
 \begin{definition}
  The \textbf{contraction process} of a dual resolution graph of a germ of a complex algebraic plane curve  $(\Gamma,p)$ with respect to one of its irreducible components $(\Gamma',p)$ consists in removing the arrow vertices and the edges connected to it except the ones which corresponds to the branches of $(\Gamma',p)$.
  In the resulting graph one repeatedly applies all possible contractions. The non-contractible graph finally obtained is the dual resolution graph of $(\Gamma',p)$.     
 \end{definition}
 
  Notice we get only arrow vertex at the end of contraction process if and only if $(\Gamma',p)$ is smooth. 
  For instance, to determine the  dual resolution graph of $(B,p_2)$ one removes the   red and arrow vertex and applies three contractions (see Fig. \ref{fig:contraction}).

 \begin{figure}[h]
     \centering
     
 \begin{tikzpicture}[line cap=round,line join=round,>=triangle 45,x=0.7cm,y=0.7cm]
\draw [line width=1pt] (0,0)-- (2,0);
\draw [line width=1pt] (2,0)-- (4,0);
\draw [->,line width=1pt,color=blue] (0,0) -- (0,2);
\draw [line width=1pt] (6,0)-- (10,0);
\draw [->,line width=1pt,color=blue] (6,0) -- (6,2);
\draw [->,line width=1pt,color=blue] (12,0) -- (12,2);
\draw [->,line width=1pt,color=blue] (13,1.7) -- (13,2.02);
\begin{scriptsize}
\draw [fill=black] (0,0) circle (2pt);
\draw[color=black] (0,-0.5) node {\normalsize$-1$};
\draw [fill=black] (2,0) circle (2pt);
\draw[color=black] (2,-0.5) node {\normalsize$-2$};
\draw [fill=black] (4,0) circle (2pt);
\draw[color=black] (4,-0.5) node {\normalsize$-2$};
\draw [fill=black] (6,0) circle (2pt);
\draw[color=black] (6,-0.5) node {\normalsize$-1$};
\draw [fill=black] (10,0) circle (2pt);
\draw[color=black] (10,-0.5) node {\normalsize $-2$};
\draw [fill=black] (12,0) circle (2pt);
\draw[color=black] (12,-0.5) node {\normalsize $-1$};
\end{scriptsize}
\end{tikzpicture}
\caption{Contraction process of the germ $(B,p_2)$.}
     \label{fig:contraction}
 \end{figure}
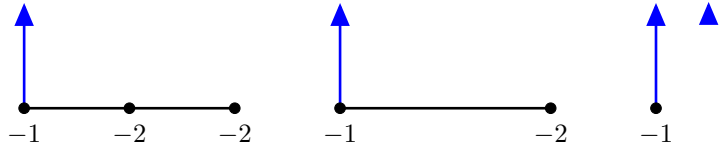

 To determine the  dual resolution graph of $(B,p_1)$ one removes the green  edge and its arrow vertex: there are no contractible vertices (see Fig. \ref{fig:bluegraph}).
\begin{figure}[h]
    \centering
 \begin{tikzpicture}[line cap=round,line join=round,>=triangle 45,x=0.8cm,y=0.8cm]
\draw [line width=1pt] (0,0)-- (2,0);
\draw [line width=1pt] (2,0)-- (4,0);
\draw [->,line width=1pt,color=blue] (2,0) -- (2,2);
\begin{scriptsize}
\draw [fill=black] (0,0) circle (2pt);
\draw[color=black] (-0.14,-0.5) node {\normalsize$-3$};
\draw [fill=black] (2,0) circle (2pt);
\draw[color=black] (2,-0.5) node {\normalsize$-1$};
\draw [fill=black] (4,0) circle (2pt);
\draw[color=black] (4,-0.5) node {\normalsize$-2$};
\end{scriptsize}
\end{tikzpicture}
\caption{Minimal resolution graph of the germ $(B,p_1)$.}
    \label{fig:bluegraph}
\end{figure}
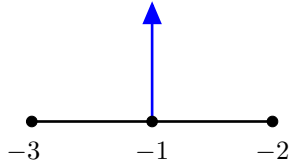
Thus, we extract from the Lipschitz graph the following data:

\begin{itemize}
    \item the number of irreducible components.
    
    \item there are three singular points, say $p_1,p_2$ and $p_3$ with dual resolution graphs $\mathcal{G}_1, \mathcal{G}_2$ and $\mathcal{G}_3$, respectively.
    By \cite[Theorem 8.1.7]{Wall}, this is equivalent to know the topological type of the germs $(\widetilde{C}\cup L_\infty,p_1), (\widetilde{C}\cup L_\infty,p_2)$ and $(\widetilde{C}\cup L_\infty,p_3)$. 
    
    \item the dual resolution graphs of the germs $(G,p_1),(B,p_1),(B,p_2)$ and $(G,p_3)$, obtained by the contraction process. 
    By \cite[Theorem 8.1.7]{Wall}, this is equivalent to know the topological type of these germs.
\end{itemize}
\end{example}
From the above example it is easy to see that the equivalence $(\ref{itii})\Leftrightarrow(\ref{itiii})$ of Theorem \ref{global} holds. Now, we deal with the equivalence between (\ref{iti}) and (\ref{itii}).

\begin{proof}[Proof of $(\ref{iti}) \Leftrightarrow (\ref{itii})$ of Theorem \ref{global}]

We start assuming that there exists a bilipschitz map $\phi: C\to \Gamma$.  
By \cite[Theorem 1.1]{Anne}, for each singular point  $p\in C$ the topological type of the germ $(C,p)$ is the same as the topological type of $(\Gamma,\phi(p))$. 
By item (\ref{it2}) of Theorem \ref{atinfinity}, there is a bijection $\psi$ between the set of points at infinity of $C$ and the set of points at infinity of $\Gamma$ such that $(\widetilde{C}\cup L_\infty,p)$ has the same topological type as $(\widetilde{\Gamma}\cup L_\infty,\psi(p))$. 

Restricting $\phi$ to smooth points of $C$ we get a homeomorphism between $C\backslash \Sigma(C)=\bigcup_{i\in I}C_i\backslash \Sigma(C)$ and $\Gamma\backslash \Sigma(\Gamma)=\bigcup_{j\in J}\Gamma_j\backslash \Sigma(\Gamma)$, where $\Sigma(C)$ and $\Sigma(\Gamma)$ denote the singular points of $C$ and $\Gamma$, respectively.
Since $C_i, \Gamma_j$ are irreducible and $\Sigma(C)$ and $\Sigma(\Gamma)$ are finite, $C_i\backslash\Sigma(C)$ and $\Gamma_j\backslash\Sigma(\Gamma)$ are connected.
Then the map  $\sigma:I\to J$, defined by 
$\sigma(i)=j\in J$ if and only if  $\phi(C_i\backslash\Sigma(C))= \Gamma_j\backslash\Sigma(\Gamma)$,  is a bijection.

We extend the application $\phi|_{C_i\backslash \Sigma(C)}$ to topological closure of $C_i\backslash \Sigma(C)$ and we get the bilipschitz map $\phi_i:C_i\to\Gamma_{\sigma(i)}, \phi_i=\phi|_{C_i}$. 
Applying  \cite[Theorem 1.1]{Anne} to   $\phi_i:C_i\to\Gamma_{\sigma(i)}$, we  obtain that for each singular point  $p\in C_i$ the topological type of the germ $(C_i,p)$ is the same as the topological type of $(\Gamma_{\sigma(i)},\phi(p))$. 

By item (\ref{it2}) of Theorem \ref{atinfinity}, there is a bijection $\psi_i$ between the set of points at infinity of $C_i$ and the set of points at infinity of $\Gamma_{\sigma(i)}$ such that $(\widetilde{C}_i\cup L_\infty,p)$ has the same topological type as $(\widetilde{\Gamma}_{\sigma(i)}\cup L_\infty,\psi_i(p))$. 
Moreover, $\psi_i$ can be chosen to be the restriction of $\psi$ to the points at infinity of $C_i$. 

Recall the parametrization $\iota:\C^2\to\P^2$ of $\P^2$ given by $\iota(x,y)=[x:y:1]$. 
Then the bijection $\varphi:\Sigma(\widetilde{C}\cup L_\infty)\to\Sigma(\widetilde{\Gamma}\cup L_\infty)$ defined by 

$$\varphi(p)=\begin{cases}
\psi(p)& \text{if $p\in L_\infty$},\\
\iota(\phi(\iota^{-1}(p)))& \text{otherwise},
\end{cases}$$
give us the bijection of item (\ref{itii}) of Theorem \ref{global}.

Now, the reciprocal, i.e., that (\ref{itii}) implies (\ref{iti}) of Theorem \ref{global}. 
We can assume that $I=J=\{1,\ldots,m\}$ and $\sigma=\mathrm{id}$. The item (\ref{itii}) of Theorem \ref{global} implies that both curves $C_i$ and $\Gamma_i$ have the same number of points at infinity and singular points for $i=0,\ldots,m$ where $C_0=C$ and $ \Gamma_0=\Gamma$.

Let $p_1,\ldots,p_{s}$ be the singular points of $\widetilde{C}$  and let $q_1,\ldots,q_{s}$ be the singular points of $\widetilde{\Gamma}$ which are not point at infinity of $C$ and $\Gamma$, respectively.
We denote in such a way that $(\widetilde{C}_i,p_l)$ has the same topological type as $(\widetilde{\Gamma}_i,q_l)$ for $l=1,\ldots,s$ and $i=0,1,\ldots, m$.

Similarly,  let $p_{s+1},\ldots,p_{m}$ be points at infinity of $C$ and let $q_{s+1},\ldots,q_{m}$ be the points at infinity of $\Gamma$  denoted in such a way that $(\widetilde{C}_i\cup L_\infty,p_l)$ has the same topological type as $(\widetilde{\Gamma}_i\cup L_\infty,q_l)$ for $l=s+1,\ldots,m$ and $i=0,1,\ldots,m.$

Let $B(p_l)\subset \P^2$ be a regular coordinate ball, that is, there exists a smooth coordinate ball $B'(p_l)\supseteq
\overline{B(p_l)}$. 
Shrinking
$B'(p_l)$ if necessary, we can assume that  $B'(p_l)\cap B'(p_j)=\emptyset$ for $l\neq j$ and we can apply
 \cite[Theorem 1.1]{Anne}, i.e., for $l=1,\ldots,s$ there exists a bilipschitz map  
 $$\phi_l:C\cap \iota^{-1}(B'(p_l))\to  \phi_l(C\cap \iota^{-1}(B'(p_l)))\subset\Gamma$$ which is biholomorphic except at $\iota^{-1}(p_l)$ and $\phi_l(\iota^{-1}(p_l))=\iota^{-1}(q_l)$.
 
 Similarly, by Theorem \ref{atinfinity}, there exists a bilipschitz $$\Phi: C\cap \Bigl(\bigcup_{l=s+1}^m\iota^{-1}(B'(p_l)) \Bigr)\to  \Phi\Bigl(C\cap\Bigl(\bigcup_{l=s+1}^m\iota^{-1}(B'(p_l))\Bigl)\Bigl)\subset \Gamma$$   
which is biholomorphic. Then the curve $C$ is almost covered with domains of bilipschitz maps. 
The part that is missing is inside of $C\backslash \Bigl(\bigcup_{l=1}^m\iota^{-1}(B(p_l)\Bigr)$ which is a union of connected compact orientable surfaces $K_i$ with boundary. More precisely, let $K_i=C_i\backslash \Bigl(\bigcup_{l=1}^m\iota^{-1}(B(p_l))\Bigr)$. 
Recall that connected compact orientable surfaces with boundary are classified up to diffeomorphism by the Euler characteristic and the number of connected components of boundary see, for instance, \cite[Chapter 9, Theorem 3.11]{hirsch1976differential}.

Let us calculate the Euler characteristic of $K_i$. Shrinking
$B'(p_l)$ if necessary, we assume that $B'(p_l)$ intersect $\widetilde{C}_i$ if and only if $p_l$ is a singular point of $\widetilde{C}_i\cup L_\infty$. By additive property of the Euler characteristic   we have:
\begin{multline*}
  \chi(\widetilde{C}_i)=\chi\Bigl(\Bigl(\widetilde{C}_i \backslash \bigcup_l  B(p_l)\Bigr)\cup \Bigl(\bigcup_l \overline{B(p_l)}\cap \widetilde{C}_i\Bigr)\Bigr) \\ =\chi\Bigl(\widetilde{C}_i \backslash \bigcup_l  B(p_l)\Bigr)+\chi\Bigl(\bigcup_l\overline{B(p_l)}\cap \widetilde{C}_i\Bigr)-\chi\Bigl(\Bigl(\widetilde{C}_i \backslash \bigcup_l  B(p_l)\Bigr)\cap \Bigl(\bigcup_l \overline{B(p_l)}\cap \widetilde{C}_i)\Bigr), 
\end{multline*}
  Note that  all spaces appearing  in the above equation are compact and triangulable. By the Conical Structure Theorem and the additive property we know that 
$$\chi\Bigl(\bigcup_l\overline{B(p_l)}\cap \widetilde{C}_i\Bigl)=\sum_l\chi\bigl(\overline{B(p_l)}\cap \widetilde{C}_i\bigl)=m_i,$$
where $m_i$ is the number of singular points of $\widetilde{C}_i\cup L_\infty$.
Shrinking each $B(p_l)$ if necessary, by \cite[Corollary 2.9]{milnor} we may assume that the boundary of $B(p_l)$ intersect $\widetilde{C}_i$ transversally. 
Thus there are two possibilities: $B(p_l)\cap \widetilde{C}_i=\emptyset$ or 
$B(p_l)\cap \widetilde{C}_i$ is a smooth compact 1-manifold.   
In the latter case,
by the classification theorem of smooth 1-manifold, the intersection is diffeomorphic to $\S^1$. In both cases we have
$$\chi\Bigl(\Bigl(\widetilde{C}_i \backslash \bigcup_l  B(p_l)\Bigr)\cap \Bigl(\bigcup_l \overline{B(p_l)}\cap \widetilde{C}_i\Bigr)\Bigr)=0.$$

On the other hand, \cite[Theorem 7.1.1]{Wall} tell us a formula for the Euler characteristic of a curve in terms of its degree and its singularities. More precisely,

$$\chi(\widetilde{C}_i)=3d_i-d_i^2 +\sum_{p\in \widetilde{C}_i}\mu_p(\widetilde{C}_i)$$
where $d_i$ denotes the degree of $\widetilde{C}_i
$ and $\mu_p(\widetilde{C}_i)$ denotes the Milnor number of $\widetilde{C}_i$ at $p$. 

Recall that the Milnor number is an invariant of the topological type, for instance, see \cite[Theorem 6.5.9]{Wall}. The degree of the curve in its turn is an invariant of the topological type of the germs  $(\widetilde{C}_i\cup L_\infty,p_l)$ since
\begin{equation*}
\deg \widetilde{C}_i=\sum_{p\in \widetilde{C}_i\cap L_\infty} (\widetilde{C}_i \cdot L_\infty)_p,
\end{equation*}
where $(\widetilde{C}_i\cdot L_\infty)_p$ denotes the intersection number between $\widetilde{C}_i$ and $L_\infty$. 
Also, the invariance of degree for the Lipschitz geometry at infinity of complex algebraic curves is given in \cite[Corollary 3.2]{invariancedegree}. Having said that, we have $\chi(\widetilde{C}_i)=\chi(\widetilde{\Gamma_i})$ and for $K_i$,
\begin{equation}\label{euler}    
\chi(K_i)=\chi(\widetilde{C}_i \backslash \cup_l  B(p_l))=\chi(\widetilde{C}_i)-m_i.
\end{equation}

Let $\mathcal{B}(q_l)=\iota\Bigl(\Phi\bigl(C\cap\iota^{-1}(B(p_l))\bigl)\Bigl)\cup\{p_l\} $ for $l=s+1,\ldots,m$
and $\mathcal{B}(q_l)=\iota\Bigl(\phi_l\bigl(C\cap \iota^{-1}(B(p_l))\bigr)\Bigr)$ for $l=1,\ldots,s$ and   $$\mathcal{K}_i=\Gamma_i\backslash \bigcup_l\phi_l(C\cap \iota^{-1}(B(p_l)))\cup  \Phi(C\cap(\bigcup_{l=s+1}^m\iota^{-1}(B(p_l))).$$
To calculate the Euler characteristic of $\mathcal{K}_i$ we notice that 
$\mathcal{K}_i=\Gamma_i\backslash \Bigl(\bigcup_{l=1}^m\iota^{-1}(\mathcal{B}(q_l))\Bigr).$ And by similar arguments as above one has 
\begin{equation}\label{eulerG}    
\chi(\mathcal{K}_i)=\chi\Bigl(\widetilde{\Gamma}_i \backslash \bigcup_l  \mathcal{B}(q_l)\Bigr)=\chi(\widetilde{\Gamma}_i)-m_i.
\end{equation}

It follows from equation (\ref{euler}) and (\ref{eulerG}) that $K_i$ and $\mathcal{K}_i$ have   the same Euler characteristic.
The map $f_i:\partial K_i\to \partial \mathcal{K}_i$ defined by the restrictions $$\phi_l|_{C_i\cap\iota^{-1}(\partial B(p_l))}\text{ for } l=1,\ldots,s \text{ and } \Phi|_{C_i\cap(\iota^{-1}(\partial B(p_l)))} \text{ for } i=s+1,\ldots, m$$ is bihomorphic.
Now, we use a  slight generalization of the classification of smooth compact surface with boundary:

\begin{lemma}\label{classification}
   Let $K_i$ and $\mathcal{K}_i$ be connected compact orientable smooth surfaces with boundary and let $f_i:\partial K_i\to \partial \mathcal{K}_i$ be an orientation-preserving diffeomorphism. Then $f_i$ extends to a diffeomorphism $F_i:K_i\to \mathcal{K}_i$ if only if $K_i$ and $\mathcal{K}_i$ have the same Euler characteristic.
\end{lemma}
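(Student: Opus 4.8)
The plan is to prove the two implications separately. The ``only if'' direction is immediate: a diffeomorphism $F_i\colon K_i\to\mathcal{K}_i$ is in particular a homeomorphism, and the Euler characteristic is a homeomorphism (indeed homotopy) invariant, so $\chi(K_i)=\chi(\mathcal{K}_i)$.

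For the converse I would first produce an abstract diffeomorphism $K_i\to\mathcal{K}_i$ and then correct it along the boundary. Since $f_i\colon\partial K_i\to\partial\mathcal{K}_i$ is a diffeomorphism and the boundary of a compact surface is a finite disjoint union of circles, $\partial K_i$ and $\partial\mathcal{K}_i$ have the same number $b$ of components. Writing $\chi=2-2g-b$ for a connected compact orientable surface of genus $g$ with $b$ boundary circles, the hypotheses $\chi(K_i)=\chi(\mathcal{K}_i)$ and $b(K_i)=b(\mathcal{K}_i)$ force the genera to agree, so by the classification \cite[Chapter 9, Theorem 3.11]{hirsch1976differential} there is a diffeomorphism $G\colon K_i\to\mathcal{K}_i$, which we may take to be orientation-preserving (post-compose with an orientation-reversing self-diffeomorphism of $\mathcal{K}_i$ if necessary). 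Then $h:=(G|_{\partial K_i})^{-1}\circ f_i$ is an orientation-preserving diffeomorphism of $\partial K_i$, and it suffices to extend $h$ to a diffeomorphism $H\colon K_i\to K_i$: indeed $F_i:=G\circ H$ is then a diffeomorphism with $F_i|_{\partial K_i}=G\circ h=f_i$.

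The remaining, and main, step is the extension of $h$. I would decompose $h$ into the permutation $\tau$ it induces on the set of boundary circles together with its restrictions $h_j$ to the individual circles. The permutation $\tau$ is realized by an orientation-preserving diffeomorphism of $K_i$ -- in the standard genus $g$ model with $b$ holes the boundary circles can be permuted arbitrarily by orientation-preserving diffeomorphisms -- so after composing with such a diffeomorphism one may assume $\tau=\mathrm{id}$, i.e.\ $h$ fixes each boundary circle setwise, whence each $h_j\colon\S^1\to\S^1$ is an orientation-preserving diffeomorphism. Since $\mathrm{Diff}^+(\S^1)$ is connected (it deformation retracts onto the group of rotations), the product isotopy of the $h_j$ provides a smooth isotopy $\{h^s\}_{s\in[0,1]}$ of diffeomorphisms of $\partial K_i$ with $h^0=\mathrm{id}$, $h^1=h$, which we may take constant near both endpoints. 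Choosing a collar embedding $c\colon\partial K_i\times[0,1)\hookrightarrow K_i$ and a smooth cutoff $\lambda\colon[0,1)\to[0,1]$ with $\lambda(0)=1$ and $\lambda\equiv 0$ on $[1/2,1)$, I would set $H(c(x,s))=c\bigl(h^{\lambda(s)}(x),s\bigr)$ on the collar and $H=\mathrm{id}$ on the complement; this $H$ is a well-defined diffeomorphism restricting to $h$ on $\partial K_i$, which finishes the argument.

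The step I expect to be the main obstacle is precisely this extension: one must realize the permutation of boundary components by honest (orientation-preserving) diffeomorphisms of $K_i$ rather than mere homeomorphisms, and carry out the collar interpolation smoothly. It is also here that the orientation-preserving hypothesis is essential -- a component reversing orientation could not be absorbed into an isotopy of the identity, and a ``mixed'' boundary diffeomorphism genuinely may fail to extend (already for an annulus, the diffeomorphism reversing one boundary circle while fixing the other does not extend over the annulus).
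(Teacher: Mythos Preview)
Your proposal is correct and follows essentially the same strategy as the paper: obtain an initial diffeomorphism from the classification of compact orientable surfaces with boundary, then correct it along a collar using that orientation-preserving self-diffeomorphisms of $\S^1$ are isotopic to the identity. The paper pulls the isotopy between $f_i$ and $g_i|_{\partial K_i}$ directly into a collar of $\partial K_i$, while you first pull back to a self-map $h$ of $\partial K_i$ and extend that; these are the same idea in slightly different packaging.

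One point worth noting: you are actually more careful than the paper on the permutation of boundary components. The paper jumps from ``every orientation-preserving diffeomorphism of $\S^1$ is isotopic to the identity'' to ``$g_i|_{\partial K_i}$ and $f_i$ are isotopic'', tacitly assuming that the classification diffeomorphism $g_i$ already sends each boundary circle to the same target circle as $f_i$. You address this explicitly by first realizing the permutation $\tau$ by an orientation-preserving diffeomorphism of $K_i$ before invoking the connectedness of $\mathrm{Diff}^+(\S^1)$. Your closing remark about the orientation hypothesis (and the annulus counterexample) is also a nice observation that the paper does not make.
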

\begin{proof}
The boundaries $\partial K_i, \partial \mathcal{K}_i$ are smooth compact 1-manifolds and thus its connected components are diffeomorphic to $\S^1$. Since $f_i$ is a diffeomorphism between $\partial K_i$ and $ \partial \mathcal{K}_i$, they have the same number of connected components. 
Let $g_i:K_i\to \mathcal{K}_i$ be the diffeomorphism given by the classification of smooth compact surface theorem \cite[Chapter 9, Theorem 3.11]{hirsch1976differential}. 
Up to isotopy every orientation-preserving diffeomorphism of $\S^1$ is the identity \cite[Chapter 8, Theorem 3.3]{hirsch1976differential}, then we know that the restriction map   $g_i|_{\partial K_i}$, and $f_i$  are isotopic, say by $H_i:[0,1]\times \partial K_i\to \partial \mathcal{K}_i, H_i(0,\cdot)=f_i, H_i(1,\cdot)=g_i|_{\partial K_i}$.  

The collar neighborhood theorem \cite[Theorem 9.26]{leesmooth} shows that $\partial K_i$ has a collar neighborhood $\mathcal{C}$ in $K_i$; which is
the image of a smooth embedding $E: [0,1)\times \partial K_i\to K_i$ satisfying $E(0,x)=x$
for all $x\in\partial K_i$. To simplify notation, we use this embedding to identify $\mathcal{C}$
with $[0,1)\times \partial K_i$ and denote a point in $\mathcal{C}$ as an ordered pair $(s, x)$  with $s\in[0,1)$ and $x \in\partial K_i$; thus $(s, x)\in \partial K_i$ if and only if $s = 0$. For any $a\in(0,1)$, let $\mathcal{C}(a)=\{(s,x)\in \mathcal{C}: 0\leq s<a\}$ and $K_i(a)=K_i\backslash \mathcal{C}(a)$.

Let $\gamma: [0,1]\to [0,1]$ be a smooth map that satisfies $\gamma(0)=0$ and $\gamma(s)=1$
for $\frac{1}{2}\leq s\leq 1$. Define $F:K_i\to \mathcal{K}_i$ by
$$F_i(p)=\begin{cases}
g_i(p),& \text{if $p\in \mathrm{Int} K_i\left(\frac{1}{2}\right)$,}\\
(s,H_i(x,\gamma(s)),& p=(s,x)\in \mathcal{C}.
\end{cases}$$

These definitions both give the map $g_i$ on the set $\mathcal{C}\backslash \overline{\mathcal{C}\left(\frac{1}{2}\right)}$
where they overlap,
so $F_i$ is a diffeomorphism extension of $f_i$.
\end{proof}

The map $F:K_i\to \mathcal{K}_i$ is a diffeomorphism between compact sets, so it is a bilipschtz map. 
The maps $F, \Phi, \phi_l$ agree on the components of the boundary of $K_i$.
It follows  that there exists a bilipschitz  map $\Psi:C\to \Gamma$ such that $\Psi|_{K_i}=F_i,\: \Psi|_{\iota^{-1}(B(p_l))\cap C}=\phi_l$ and    
$\Psi|_{C\cap\bigl(\bigcup_{l=s+1}^m\iota^{-1}(B(p_l))\bigl)}=\Phi$. 
\end{proof}


\end{document}